\font\bbbld=msbm10 scaled\magstephalf
\newcommand{\bpartial}{\bar{\partial}}
\def \a{\alpha}
\def \b{\beta}
\def \p{\partial}
\newcommand{\bfR}{\hbox{\bbbld R}}
\newtheorem{theorem}{Theorem}[section]
\newtheorem{lemma}[theorem]{Lemma}
 \theoremstyle{definition}
\theoremstyle{remark}
\numberwithin{equation}{section}
\begin{document}
\setlength{\baselineskip}{1.2\baselineskip}

\title[On uniform estimate of complex elliptic equations]
{On uniform estimate of complex elliptic equations on closed Hermitian manifolds}

\author{Wei Sun}

\address{Shanghai Center for Mathematical Sciences, Fudan University}
\email{sunwei\_math@fudan.edu.cn}

\begin{abstract}
In this paper, we study Hessian equations and complex quotient equations on closed Hermitian manifolds. We directly derive the uniform estimate for the admissible solution. As an application, we solve general Hessian equations on closed K\"ahler manifolds.
\end{abstract}

\maketitle

\section{Introduction}
\label{gche-int}
\setcounter{equation}{0}
\medskip

Let $(M^n, \omega)$ be a compact Hermitian manifold of complex dimension $n\geq 2$ and $\chi$ a smooth real $(1,1)$ form on $M$. Write $\omega$ and $\chi$ respectively as
\[
\omega = {\sqrt{-1}} \sum_{i,j} g_{i\bar j} dz^i \wedge d\bar z^j\quad \text{ and }\quad
\chi = \sqrt{-1} \sum_{i,j} \chi_{i\bar j} dz^i \wedge d\bar z^j .
\]
We denote $\chi_u : = \chi + {\sqrt{-1}} \p\bpartial u$.


In the seminal paper~\cite{Yau78}, Yau proved the Calabi conjecture by solving the complex Monge-Amp\`ere equation on closed K\"ahler manifolds,
\begin{equation}
	\omega^n_u = f \omega^n, \qquad f > 0 \text{ on } M. 
\end{equation}
Since then, remarkable progress and geometric applications have been made. The Hermitian case was later solved by Tosatti and  Weinkove~\cite{TWv10b}. In the setting of moment maps, Donaldson~\cite{Donaldson99a} proposed an equation on closed K\"ahler manifolds
\begin{equation}
	\chi^n_u = f \chi^{n - 1}_u \wedge \omega, \qquad f > 0 \text{ on } M. 
\end{equation}
Surprisingly, in the study of Mabuchi enery, Chen~\cite{Chen00b} found the same equation in the critical case. In addition,  Hou,  Ma and  Wu~\cite{HouMaWu10} considered the Hessian equation on closed K\"ahler manifolds, for $2 \leq \a \leq  n - 1$
\begin{equation}
	\omega^\a_u \wedge \omega^{n - \a} = f \omega^n, \qquad f > 0 \text{ on } M,
\end{equation}
and proved a second order estimate. Following this estimate, Dinew and Kolodziej~\cite{DK} applied a blow-up argument to obtain the gradient estimate and consequently solved the Hessian equation. These equations attract much attention in complex geometry and analysis, and are still fast-developing subjects nowadays.


Indeed, all these equations can be included in the same class. For a smooth positive real function $f$ on $M$, we are concerned with the following complex equations,
\begin{enumerate}
\item {\em The complex $\a$-Hessian equation.} For $2 \leq \a \leq n$ and $\chi \in \Gamma^\a_\omega$,
\begin{equation}
\label{hessian-equation}
	\chi^\a_u \wedge \omega^{n - \a} = f \omega^n ,\qquad \text{with } \chi_u \in \Gamma^\a_\omega.
\end{equation}

\item {\em The complex $(\a,\b)$-quotient equation.} For $ 1 \leq \b < \a \leq n$ and $\chi \in \Gamma^\a_\omega$,
\begin{equation}
\label{quotient-equation}
	\chi^\a_u \wedge \omega^{n - \a} = f \chi^\b_u \wedge \omega^{n - \b} ,\qquad \text{with } \chi_u \in \Gamma^\a_\omega.
\end{equation}
Following \cite{SW08}, \cite{FLM11} and \cite{GSun12}, we define the cone class
\begin{equation}
\label{cone-condition}
	\mathscr{C}_{\a,\b} (f) := \{[\chi]\, : \exists \chi' \in \Gamma^\a_\omega \cap [\chi],\, \a \chi'^{\a - 1} \wedge \omega^{n - \a} > \b f \chi'^{\b - 1} \wedge \omega^{n - \b}\} .
\end{equation}
If $[\chi] \in \mathscr{C}_{\a,\b} (f)$, we say that $\chi$ satisfies the cone condition for equation~\eqref{quotient-equation} with respect to $f$. We say that $\chi'$ is in the cone subject to $f$.

\end{enumerate}
Here $\Gamma^\a_\omega$ is the set of all the real $(1, 1)$ forms whose eigenvalue set with respect to $\omega$ belong to $\a$-positive cone $\Gamma^\a \subset \bfR^n$. In fact, complex $\a$-Hessian equation can be treated as a particular case of complex quotient equations when $\beta = 0$.  However, complex Hessian equation has a natural strong cone condition, i.e. $\mathscr{C}_{\a,0} (f) = \{[\chi]\, :\, \Gamma^\a_\omega \cap [\chi] \neq \emptyset\}$, which is independent on $f$. This makes the approaches for Hessian equations easier than those for quotient equations.

In the work of the author~\cite{Sun2014e}, we provide a direct uniform estimate for the two types of equations on closed K\"ahler manifolds. In this paper, we further develop the technique in \cite{Sun2014e} and apply a key lemma by Zhang~\cite{Zhang2015} to extend the estimate to general Hermitian cases. The lemma in \cite{Zhang2015} helps us to improve our result in an early version of this paper. Our main result is as follows.
\begin{theorem}
\label{main-theorem}
Let $(M,g)$ be a closed Hermitian manifold of complex dimension $n \geq 2$ and $\chi$ a smooth real $(1,1)$ form. Let $u$ be a smooth solution to either \eqref{hessian-equation} or \eqref{quotient-equation} with the corresponding cone condition. 
Then there is a uniform estimate for $u$ depending only on $(M, \omega)$, $\chi$ and $f$.
\end{theorem}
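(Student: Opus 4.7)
The plan is to derive the $L^\infty$ bound by a direct integral iteration, adapting the method of \cite{Sun2014e} from the K\"ahler case to the Hermitian setting. The only new ingredient needed is the lemma of Zhang in \cite{Zhang2015}, which provides the control on the torsion remainders forced by $d\omega\neq 0$.

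After translating so that $\sup_M u = 0$, I would work with $v:=1-u\ge 1$ and the integrals
\begin{equation*}
I_p \;:=\; \int_M v^{p}\,\omega^n,\qquad p\ge 1.
\end{equation*}
To set up a recurrence on $I_p$ I would test the equation against $v^{p+1}$ and use the decomposition
\begin{equation*}
\chi_u^\a\wedge\omega^{n-\a} \;=\; \chi_u^{\a-1}\wedge\chi\wedge\omega^{n-\a} \;-\; \sqrt{-1}\,\p\bpartial v \wedge \chi_u^{\a-1}\wedge\omega^{n-\a},
\end{equation*}
followed by integration by parts on the second term. This produces a favorable quadratic-gradient integral
\begin{equation*}
(p+1)\,p\int_M v^{p-1}\,\sqrt{-1}\,\p v\wedge\bpartial v\wedge\chi_u^{\a-1}\wedge\omega^{n-\a} \;\ge\; 0,
\end{equation*}
non-negative because $\chi_u\in\Gamma^\a_\omega$. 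Iterating the expansion $\chi_u=\chi+\sqrt{-1}\,\p\bpartial u$ on the remaining $\chi_u^{\a-1}$ factor reduces everything else to integrals of $v^{p+1}$ against fixed positive forms, up to the torsion error terms described below. For the quotient equation \eqref{quotient-equation} the same procedure is carried out on $\chi_u^\a\wedge\omega^{n-\a}-f\,\chi_u^\b\wedge\omega^{n-\b}$, and the cone condition \eqref{cone-condition} is precisely what ensures that the surviving coefficient of the gradient term remains positive.

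The main obstacle is controlling the torsion contributions forced by $d\omega\neq 0$. Every integration by parts moving a $\p\bpartial$ off of $v$ generates error integrals of the schematic form
\begin{equation*}
\int_M v^{p}\,\p v\wedge T_1 \qquad\text{and}\qquad \int_M v^{p+1}\,T_2,
\end{equation*}
where $T_1,T_2$ are built from $\p\omega$, $\bpartial\omega$, and $\sqrt{-1}\,\p\bpartial\omega$ wedged with positive forms coming from $\chi_u^{\a-1}\wedge\omega^{n-\a-1}$. These are precisely the integrals handled by Zhang's lemma from \cite{Zhang2015}, which I would apply in the form
\begin{equation*}
\Big|\int_M v^{p}\,\p v\wedge T_1\Big| \;\le\; \e(p+1)\int_M v^{p-1}\,\sqrt{-1}\,\p v\wedge\bpartial v\wedge\Theta \;+\; C_\e\, I_{p+1}
\end{equation*}
for a suitable dominating positive form $\Theta$ and any $\e>0$. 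Choosing $\e$ small absorbs the gradient error into the main positive term above, while the constant-order remainder feeds back into a Moser-type recurrence for $I_p$. Starting the recurrence from an $I_1$ bound (obtained by running the same computation at $p=0$, where only the total mass $\int_M f\,\omega^n$ enters) and iterating produces the uniform estimate claimed in Theorem~\ref{main-theorem}.
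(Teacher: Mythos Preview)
Your overall strategy is close in spirit to the paper's, but there are two concrete gaps.

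First, you have misidentified Zhang's lemma. It is not an integral Cauchy--Schwarz inequality for torsion terms; it is the pointwise algebraic statement that for $\lambda\in\Gamma^\a$ and $0\le i\le \a-2$, any product $|\lambda_{j_1}\cdots\lambda_{j_i}|$ with the $j_k$ avoiding a fixed index $j$ is bounded by $C\,S_{i;j}(\lambda)$. In the paper this is used \emph{pointwise} in normal coordinates (see Lemma~\ref{gche-lemma-1}) to show that the bad torsion integrands such as $\p u\wedge\bpartial u\wedge\chi_{tu}^{\a-2}\wedge\p\bpartial\omega^{n-\a}$ are dominated by the good form $\sqrt{-1}\,\p u\wedge\bpartial u\wedge\chi_{tu}^{\a-2}\wedge\omega^{n-\a+1}$. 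Without this algebraic control you cannot replace a product of eigenvalues (which may have signs) by a symmetric function of the right degree, and your absorption step does not go through.

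Second, your sentence ``iterating the expansion $\chi_u=\chi+\sqrt{-1}\,\p\bpartial u$ on the remaining $\chi_u^{\a-1}$ factor reduces everything else to integrals of $v^{p+1}$ against fixed positive forms'' is not correct, and this is where the real work lies. After each integration by parts you are left with integrals against $\chi_u^{\a-2}\wedge\omega^{n-\a+2}$, $\chi_u^{\a-3}\wedge\omega^{n-\a+3}$, etc., and these are \emph{not} fixed forms---they depend on the unknown $u$ and are not a priori bounded. The paper handles this by introducing the homotopy $\chi_{tu}:=\chi+t\sqrt{-1}\,\p\bpartial u$, writing $\chi_u^\a-\chi^\a=\a\int_0^1\sqrt{-1}\,\p\bpartial u\wedge\chi_{tu}^{\a-1}\,dt$, and then proving the iteration Lemma~\ref{gche-lemma-iteration}: via G\r{a}rding's inequality and an integration by parts in $t$, one shows
\[
\int_0^1\!\!\int_M e^{-pu}\sqrt{-1}\,\p u\wedge\bpartial u\wedge\chi_{tu}^{k-1}\wedge\omega^{n-k}\,dt \;\ge\; \frac{(k-1)\lambda}{k}\int_0^1\!\!\int_M e^{-pu}\sqrt{-1}\,\p u\wedge\bpartial u\wedge\chi_{tu}^{k-2}\wedge\omega^{n-k+1}\,dt,
\]
and similarly for the $\chi_{tu}^k\wedge\omega^{n-k}$ integrals. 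This iteration is what lets you descend from $\chi_{tu}^{\a-1}$ all the way to $\omega^{n-1}$ and close the estimate. Your proposal has no analogue of this step, and in particular you do not explain how to get a uniform lower bound of the form $\int v^{p-1}\sqrt{-1}\,\p v\wedge\bpartial v\wedge\chi_u^{\a-1}\wedge\omega^{n-\a}\ge c\int v^{p-1}|\p v|^2\,\omega^n$; merely knowing $\chi_u\in\Gamma^\a_\omega$ gives positivity but not a uniform constant. The $t$-interpolation together with G\r{a}rding is precisely the missing idea.

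A minor point: the paper uses exponential weights $e^{-pu}$ and proves $\int_M|\p e^{-pu/2}|^2\,\omega^n\le Cp\int_M e^{-pu}\,\omega^n$ (inequality~\eqref{gche_main_inequality}), then invokes \cite{TWv10a,TWv10b}. Your polynomial weights $v^p$ with Moser iteration would be an acceptable variant, but the substantive gaps above must be filled first.
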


It is worth a mention that the estimate of the quotient equation depends on the gradient of $f$ at most, while that in \cite{Sun2014e} needs the data of derivatives up to second order. Zhang also independently solved the complex Hessian equations when $\chi = \omega$ with a direct uniform estimate. Very recently, Zhang have informed the author that using G\r{a}ding's inequality, his method and result can be extended to more general cases. Moreover, Sz\'ekelyhidi~\cite{Szekelyhidi2014b} was able to solve the complex Hessian equations with a uniform estimate based on the approach of Blocki~\cite{Blocki2005a}.


Generally, the uniform estimate is the most difficult one on closed manifolds. When we are able to prove it, it is natural to consider the solvability.
As an application of the uniform estimate, we study complex Hessian equations on closed K\"ahler manifolds.
\begin{theorem}
\label{theorem-hessian}
Let $(M,g)$ be a closed K\"ahler manifold of complex dimension $n \geq 2$ and $\chi$ a smooth real $(1,1)$ form in $\Gamma^\a_\omega$. 
Then there exists a unique smooth real function $u$ and a unique real number $b$ such that
\begin{equation}
	\chi^\a_u \wedge \omega^{n - \a} = e^b f \omega^n ,\qquad \text{with } \chi_u \in \Gamma^\a_\omega \text{ and } \sup_M u = 0.
\end{equation}
\end{theorem}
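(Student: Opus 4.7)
\medskip
The plan is to solve the equation by the continuity method. Fix a smooth path $\{f_t\}_{t \in [0,1]}$ of smooth positive functions with $f_1 = f$ and $f_0 := \chi^\a\wedge\omega^{n-\a}/\omega^n$, so that $(u_0, b_0) = (0, 0)$ trivially solves the $t=0$ equation. Let $S \subset [0,1]$ denote the set of $t$ for which there exists $(u_t, b_t) \in C^{4,\alpha}(M)\times\bfR$ with $\sup_M u_t = 0$, $\chi_{u_t} \in \Gamma^\a_\omega$, and $\chi_{u_t}^\a \wedge \omega^{n-\a} = e^{b_t} f_t \omega^n$. Since $0 \in S$, it suffices to prove that $S$ is both open and closed in $[0,1]$.

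For openness, I would apply the implicit function theorem at each $(u_t, b_t) \in S$ to the map
\[
F(v, c) := \log\bigl((\chi_{u_t}+\sqrt{-1}\p\bpartial v)^\a\wedge\omega^{n-\a}/\omega^n\bigr) - c - b_t - \log f_t,
\]
defined on $\{v \in C^{k,\alpha}(M) : \sup v = 0\} \times \bfR$ with values in $C^{k-2,\alpha}(M)$. Its linearization in $v$ is a second-order linear elliptic operator with no zeroth-order term (by the admissibility $\chi_{u_t}\in\Gamma^\a_\omega$), together with the shift $-c$; on the K\"ahler manifold this operator has kernel equal to the constants, so the normalization $\sup v = 0$ removes the kernel and the free parameter $c$ compensates the one-dimensional cokernel via the Fredholm alternative. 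The resulting isomorphism yields solvability for $t$ in a neighborhood.

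For closedness, take $t_n \to t_\infty$ with $t_n \in S$ and establish uniform $C^{k,\alpha}$ estimates on $(u_{t_n}, b_{t_n})$. The constants $b_{t_n}$ are bounded by integrating the equation and using the cohomological constancy of $\int_M \chi_u^\a\wedge\omega^{n-\a}$ guaranteed by $d\omega = 0$. The uniform $C^0$ estimate is precisely Theorem~\ref{main-theorem}. Given this, the Hou--Ma--Wu second-order estimate of \cite{HouMaWu10} yields $\|\p\bpartial u_t\|_{C^0} \leq C(1 + \|\nabla u_t\|_{C^0}^2)$, and the Dinew--Kolodziej blow-up argument of \cite{DK} converts this into a genuine $C^1$, and hence $C^2$, bound. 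Evans--Krylov applied to the concave operator $\bigl(\chi_u^\a\wedge\omega^{n-\a}/\omega^n\bigr)^{1/\a}$ then provides $C^{2,\alpha}$ control, and Schauder bootstrapping delivers estimates of all orders. Passing to a subsequence produces a limit solution at $t = t_\infty$; strict admissibility $\chi_{u_{t_\infty}}\in \Gamma^\a_\omega$ at the limit follows from $f > 0$ together with the standard fact that $\overline{\Gamma^\a}\cap\{\sigma_\a>0\} = \Gamma^\a$. Uniqueness is immediate from the maximum principle: evaluating $\chi_{u_1}^\a\wedge\omega^{n-\a} = e^{b_1-b_2}\chi_{u_2}^\a\wedge\omega^{n-\a}$ at the extrema of $u_1 - u_2$ forces $b_1 = b_2$, and then $u_1 - u_2$ solves a homogeneous linear elliptic equation on $M$, hence is constant and zero by the normalization.

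Since every other ingredient of this continuity argument is classical, the main obstacle is precisely the uniform $C^0$ estimate for general complex Hessian equations, which is now provided by Theorem~\ref{main-theorem}; Theorem~\ref{theorem-hessian} thus follows by assembling the above pieces.
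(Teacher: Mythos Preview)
Your proposal is correct and follows essentially the same route as the paper: the continuity method with openness via the implicit function theorem and closedness via the chain $C^0$ (Theorem~\ref{main-theorem}) $\Rightarrow$ $C^2$ (Hou--Ma--Wu type estimate) $\Rightarrow$ $C^1$ (Dinew--Kolodziej blow-up) $\Rightarrow$ $C^{2,\alpha}$ (Evans--Krylov) $\Rightarrow$ $C^\infty$ (Schauder), together with uniqueness from the maximum principle. The one point the paper treats more explicitly is that \cite{HouMaWu10} is stated only for $\chi = \omega$, so the paper re-derives the quadratic second-order estimate for general $\chi \in \Gamma^\a_\omega$ as Lemma~\ref{lemma-hessian-C2}; you should cite that extension rather than \cite{HouMaWu10} directly.
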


When the equation is Monge-Amp\`ere type, Guan and the author~\cite{GSun12} treated the Dirichlet problem and derived the gradient estimate directly. For general cases, a blow-up argument is still necessary so far.

\bigskip
\noindent
{\bf Acknowledgements}\quad
The author would like to thank S{\l}awomir Dinew, Jixiang Fu and Bo Guan for comments and helpful suggestions.
The author also expresses his gratitude to Mulin Li  for helpful discussions.

\bigskip

\section{The uniform estimate}
\label{gche-proof}
\setcounter{equation}{0}
\medskip

It is sufficient to prove the inequality
\begin{equation}
\label{gche_auto_inequality}
	\Delta u > - C
\end{equation}
and
\begin{equation}
\label{gche_main_inequality}
	\int_M |\p e^{- \frac{p}{2} u}|^2_g \omega^n \leq C p\int_M e^{- p u} \omega^n
\end{equation}
for $p$ large enough. Without loss of generality, we may assume $p \gg 1$ throughout this section. The first inequality~\eqref{gche_auto_inequality} is naturally satisfied, and thus we only need to verify the second inequality~\eqref{gche_main_inequality}. We refer the readers to \cite{TWv10a},~\cite{TWv10b} and~\cite{Yau78} for more details.

For general Hermitian manifolds, we need the following lemma by Zhang~\cite{Zhang2015}.
\begin{lemma}
\label{gche-lemma-polynomial}
Suppose that $\lambda \in \Gamma^\a$, $3 \leq \a \leq n$ and $\lambda_1 \geq \lambda_2 \geq \cdots \geq \lambda_n$, then there exists a uniform  positive constant $C$ such that for $0 \leq i \leq \a - 2$ and $1 \leq j \leq n$,
\begin{equation}
	|\lambda_{j_1} \lambda_{j_2} \cdots \lambda_{j_i}| \leq C S_{i;j} ( \boldsymbol{\lambda}),
\end{equation} 
where $1 \leq j_1 < j_2 < \cdots < j_i \leq n$ and $j_k \neq j$ for all $1 \leq k \leq \a$ .
\end{lemma}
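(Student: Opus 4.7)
The plan is to argue by induction on $i$, with the base case $i=0$ trivial since both sides of the inequality equal $1$.

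For the inductive step, I would peel off one factor using the standard decomposition identity $\sigma_i(\lambda|j) = \lambda_k \sigma_{i-1}(\lambda|j,k) + \sigma_i(\lambda|j,k)$, together with two structural facts about G\r{a}rding cones: (i) if $\lambda \in \Gamma^\alpha$ then the reduced vector $(\lambda|S)$ lies in $\Gamma^{\alpha - |S|}$ for any index set $S$, so in particular $\sigma_l(\lambda|j,k) \geq 0$ whenever $l \leq \alpha - 2$; and (ii) under the ordering $\lambda_1 \geq \cdots \geq \lambda_n$, the entries $\lambda_1, \ldots, \lambda_{\alpha-1}$ are strictly positive. Applying the inductive hypothesis with the enlarged exclusion set $\{j, j_1\}$ to the $(i-1)$-fold product $\lambda_{j_2} \cdots \lambda_{j_i}$ reduces matters to bounding $|\lambda_{j_1}| \sigma_{i-1}(\lambda|j, j_1)$ by $C \sigma_i(\lambda|j)$. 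If $\lambda_{j_1} \geq 0$, the identity immediately gives $\lambda_{j_1} \sigma_{i-1}(\lambda|j, j_1) = \sigma_i(\lambda|j) - \sigma_i(\lambda|j, j_1) \leq \sigma_i(\lambda|j)$ via (i); and if $\lambda_{j_1} < 0$ while some other factor $\lambda_{j_m} \geq 0$, I simply permute the peeling order to reduce to the previous case.

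The main obstacle is the subcase in which \emph{every} $\lambda_{j_m}$ is strictly negative; fact (ii) then forces $\{j_1, \ldots, j_i\} \subseteq \{\alpha, \ldots, n\}$. In this subcase I would peel off the smallest entry $\lambda_n$ and iterate the decomposition for $l = 1, \ldots, \alpha - 1$: positivity $\sigma_l(\lambda|j) > 0$ yields $\sigma_l(\lambda|j,n) > |\lambda_n| \sigma_{l-1}(\lambda|j,n)$, and multiplying across $l$ gives $|\lambda_n|^l < \sigma_l(\lambda|j,n)$. Newton's log-concavity $\sigma_l^2 \geq \frac{(l+1)(m-l+1)}{l(m-l)} \sigma_{l-1}\sigma_{l+1}$, valid for any real array, then upgrades these strict inequalities to a quantitative gap $\sigma_l(\lambda|j,n) \geq c_l |\lambda_n| \sigma_{l-1}(\lambda|j,n)$ with $c_l > 1$ depending only on $n$ and $\alpha$. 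Rearranging the decomposition $\sigma_i(\lambda|j) = \sigma_i(\lambda|j,n) - |\lambda_n|\sigma_{i-1}(\lambda|j,n)$ produces the estimate $\sigma_i(\lambda|j,n) \leq C \sigma_i(\lambda|j)$, and iterating over the remaining negative entries of $I$ completes the induction.

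The hardest step is precisely this all-negative subcase: the positivity of the $\sigma_l$'s alone yields only strict but unquantified inequalities $\sigma_l/\sigma_{l-1} > |\lambda_n|$, and the required uniform slack must be extracted from the combinatorial constants in Newton's log-concavity while being carefully propagated across multiple negative entries. Everything else in the proof is routine bookkeeping around the inductive peeling identity.
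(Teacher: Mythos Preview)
The paper does not supply its own proof of this lemma; it is quoted verbatim from Zhang~\cite{Zhang2015} and used as a black box. So there is no in-paper argument to compare against.

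As for your proposal itself: the strategy is sound and can be made into a complete proof, but two points deserve care. First, your inductive step invokes the estimate with the enlarged exclusion set $\{j,j_1\}$, which is not literally the statement being proved. The clean fix is to observe that $(\lambda\,|\,j_1)\in\Gamma^{\alpha-1}\subset\bfR^{n-1}$ and apply the lemma at level $i-1$ to this reduced vector; this requires $\alpha-1\ge 3$, but when $\alpha=3$ one has $i\le 1$ and the reduction lands in the trivial base case. Second, in the all-negative subcase your phrase ``iterating over the remaining negative entries'' is unnecessary and slightly obscures the argument. Once you know $r_l:=\sigma_l(\lambda|j,n')/\sigma_{l-1}(\lambda|j,n')>|\lambda_{n'}|$ for $l\le\alpha-1$ (from $\sigma_l(\lambda|j)>0$) and Newton's log-concavity gives $r_i\ge c\, r_{\alpha-1}>c\,|\lambda_{n'}|$ with $c>1$ for $i\le\alpha-2$, you get in one stroke
\[
\sigma_i(\lambda|j)=\sigma_i(\lambda|j,n')\Bigl(1-\tfrac{|\lambda_{n'}|}{r_i}\Bigr)>\Bigl(1-\tfrac{1}{c}\Bigr)\sigma_i(\lambda|j,n'),
\]
while telescoping the ratio bounds yields $|\lambda_{n'}|^{i}<\sigma_i(\lambda|j,n')$. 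Since every $\lambda_{j_m}$ is negative and $|\lambda_{j_m}|\le|\lambda_{n'}|$ (here $n'$ is the index of the smallest entry among $\{1,\dots,n\}\setminus\{j\}$), the product bound $|\lambda_{j_1}\cdots\lambda_{j_i}|\le|\lambda_{n'}|^{i}<\tfrac{c}{c-1}\,\sigma_i(\lambda|j)$ follows directly, with no further iteration needed.
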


The following lemma was essentially proven by Sun~\cite{Sun2014e}.
\begin{lemma}
\label{gche-lemma-iteration}
Suppose that there are constants $\Lambda \geq \lambda > 0$ such that $\chi - \lambda \omega, \Lambda \omega - \chi \in \Gamma^\a_\omega$. For $2 \leq k \leq \a$, we have
\begin{equation}
\begin{aligned}
	&  \int^1_0 \left(\int_M e^{- p u} \sqrt{- 1} \p u \wedge \bpartial u \wedge \chi^{k - 1}_{t u} \wedge \omega^{n - k} \right) dt \\
	&\qquad\geq \frac{(k - 1)\lambda}{k} \int^1_0 \left(\int_M e^{- p u} \sqrt{- 1} \p u \wedge \bpartial u \wedge \chi^{k - 2}_{t u} \wedge \omega^{n - k + 1} \right) dt . 
\end{aligned}
\end{equation}
and
\begin{equation}
	 \int^1_0 \left(\int_M e^{- p u} \chi^k_{t u} \wedge \omega^{n - k}\right) dt \geq \frac{k\lambda}{k + 1} \int^1_0 \left(\int_M e^{- p u} \chi^{k - 1}_{t u} \wedge \omega^{n - k + 1}\right) dt  .
\end{equation}
\end{lemma}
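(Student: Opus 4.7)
The plan is to prove both inequalities by a single parallel computation, avoiding integration by parts entirely and relying only on the algebraic identity
\[
	t\sqrt{-1}\,\p\bpartial u \;=\; \chi_{tu}-\chi
\]
together with G{\aa}rding's inequality applied pointwise. Avoiding integration by parts matters in the Hermitian setting because the torsion would otherwise generate extra terms that one has to control separately.

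First I set $\Phi_k(t):=\int_M e^{-pu}\,\chi_{tu}^k\wedge\omega^{n-k}$. Differentiating under the integral and multiplying by $t$, the identity above yields
\[
	t\Phi_k'(t) \;=\; k\int_M e^{-pu}\,\chi_{tu}^{k-1}\wedge(\chi_{tu}-\chi)\wedge\omega^{n-k} \;=\; k\Phi_k(t)-k\int_M e^{-pu}\,\chi\wedge\chi_{tu}^{k-1}\wedge\omega^{n-k}.
\]
Since $\chi_{tu}=t\chi_u+(1-t)\chi\in\Gamma^\a_\omega$ by convexity, and $\chi-\lambda\omega\in\Gamma^\a_\omega\subset\Gamma^k_\omega$ (as $k\le\a$), G{\aa}rding's inequality extended continuously to the closed cone forces the $(n,n)$-form $(\chi-\lambda\omega)\wedge\chi_{tu}^{k-1}\wedge\omega^{n-k}$ to be pointwise non-negative. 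This converts the displayed identity into the differential inequality
\[
	\frac{d}{dt}\bigl[t\Phi_k(t)\bigr]\;\le\;(k+1)\Phi_k(t)-k\lambda\int_M e^{-pu}\,\chi_{tu}^{k-1}\wedge\omega^{n-k+1}.
\]
Integrating over $[0,1]$ and discarding the non-negative boundary term $\Phi_k(1)\ge 0$ yields the second inequality.

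The first inequality follows from the same template with $\Psi_k(t):=\int_M e^{-pu}\,\sqrt{-1}\,\p u\wedge\bpartial u\wedge\chi_{tu}^{k-1}\wedge\omega^{n-k}$ in place of $\Phi_k$. I expect the main delicate step to be verifying the required pointwise positivity
\[
	\sqrt{-1}\,\p u\wedge\bpartial u\wedge(\chi-\lambda\omega)\wedge\chi_{tu}^{k-2}\wedge\omega^{n-k}\;\ge\;0,
\]
since $\sqrt{-1}\,\p u\wedge\bpartial u$ is rank-one and sits on the boundary of the cone rather than in its interior. The resolution is that, as a non-negative $(1,1)$-form, it lies in $\overline{\Gamma^n_\omega}\subset\overline{\Gamma^k_\omega}$; approximating by $\sqrt{-1}\,\p u\wedge\bpartial u+\e\omega$ and passing to the limit brings the mixed polarization $\sigma_k(\chi-\lambda\omega,\,\sqrt{-1}\,\p u\wedge\bpartial u,\,\chi_{tu},\ldots,\chi_{tu})$ within the scope of the closure version of G{\aa}rding, which forces non-negativity. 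The rest of the derivation, including $\Psi_k(1)\ge 0$, is identical. It is worth noting that Lemma~\ref{gche-lemma-polynomial} is not used in proving this particular lemma; its role is reserved for the subsequent integral analysis.
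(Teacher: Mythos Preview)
Your proof is correct and is essentially the same argument as the paper's: both split off one factor $\chi_{tu}=\chi+t\sqrt{-1}\,\p\bpartial u$, use G\r{a}rding's inequality pointwise on the $\chi$ piece to extract $\lambda\omega$, and absorb the remaining $t\frac{d}{dt}$ term by an integration in $t$ together with the non-negativity of the boundary value at $t=1$. The ``integration by parts'' in the paper's proof is in the $t$-variable, not on $M$, so no torsion terms appear there either; your differential-inequality formulation for $t\Phi_k(t)$ and $t\Psi_k(t)$ is just a cosmetic repackaging of that same step.
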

\begin{proof}

 Using intergration by parts and G\r{a}rding's inequality, for $2 \leq k \leq \a$
\begin{equation}
\begin{aligned}
	& \int^1_0 \left(\int_M e^{- p u} \sqrt{- 1} \p u \wedge \bpartial u \wedge \chi^{k - 1}_{t u} \wedge \omega^{n - k} \right) dt \\
	\geq& \lambda \int^1_0 \left(\int_M e^{- p u} \sqrt{- 1} \p u \wedge \bpartial u \wedge \chi^{k - 2}_{t u} \wedge \omega^{n - k + 1} \right) dt \\
	& + \frac{1}{k - 1} \int^1_0 \left(\int_M e^{- p u} \sqrt{ - 1} \p u \wedge \bpartial u \wedge t \frac{d}{dt} \chi^{k - 1}_{t u} \wedge \omega^{n - k}\right) dt \\
	=& \lambda \int^1_0 \left(\int_M e^{- p u} \sqrt{- 1} \p u \wedge \bpartial u \wedge \chi^{k - 2}_{t u} \wedge \omega^{n - k + 1} \right) dt \\
	& + \frac{1}{k - 1} \int_M e^{- p u} \sqrt{- 1} \p u \wedge \bpartial u \wedge \chi^{k - 1}_u \wedge \omega^{n - k} \\
	& - \frac{1}{k - 1} \int^1_0 \left(\int_M e^{- p u} \sqrt{- 1} \p u \wedge \bpartial u \wedge \chi^{k - 1}_{t u} \wedge \omega^{n - k}\right) dt .
\end{aligned}
\end{equation}
Also, we have
\begin{equation}
\begin{aligned}
	&\, \int^1_0 \left(\int_M e^{- p u} \chi^k_{t u} \wedge \omega^{n - k}\right) dt \\
	\geq&\, \lambda \int^1_0 \left(\int_M e^{- p u} \chi^{k - 1}_{t u} \wedge \omega^{n - k + 1}\right) dt + \frac{1}{k} \int^1_0 \left(\int_M e^{- p u} t \frac{d}{d t} \chi^k_{t u} \wedge \omega^{n - k}\right) dt \\
	=&\,  \lambda \int^1_0 \left(\int_M e^{- p u} \chi^{k - 1}_{t u} \wedge \omega^{n - k + 1}\right) dt + \frac{1}{k} \int_M e^{- p u} \chi^k_u \wedge \omega^{n - k} \\
	&\, - \frac{1}{k} \int^1_0 \left(\int_M e^{- p u} \chi^k_{t u} \wedge \omega^{n - k} \right)dt .
\end{aligned}
\end{equation}

\end{proof}

We shall use Lemma~\ref{gche-lemma-polynomial} and Lemma~\ref{gche-lemma-iteration} to deal with some troublesome terms due to torsion. 
\begin{lemma}
\label{gche-lemma-1}
There is a uniform constant $C > 0$ such that
\begin{equation}
\label{gche-lemma-1-inequality}
\begin{aligned}
	&\, \left|\int^1_0 \left(\int_M e^{- p u} \chi^{\a - 1}_{t u} \wedge \sqrt{- 1} \p \bpartial \omega^{n - \a} \right) dt \right| \\
	\leq&\, C p \int^1_0 \left(\int_M e^{- p u} \sqrt{- 1} \p u \wedge \bpartial u \wedge \chi^{\a - 2}_{t u} \wedge \omega^{n - \a + 1}\right) dt \\
	&\,  + C \int^1_0 \left(\int_M e^{- p u} \chi^{\a - 2}_{t u} \wedge \omega^{n - \a + 2}\right) dt  .
\end{aligned}
\end{equation}
\end{lemma}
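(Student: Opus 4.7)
My plan is to reduce the torsion integral by a single integration by parts and then control the resulting pieces via pointwise Cauchy--Schwarz, using Lemma~\ref{gche-lemma-polynomial} for partial-symmetric-polynomial bounds and Lemma~\ref{gche-lemma-iteration} for iteration in $t$. I would apply Stokes' theorem to $d(e^{-pu}\chi^{\a-1}_{tu}\wedge\sqrt{-1}\bpartial\omega^{n-\a})$. The $\bpartial$-on-$\bpartial$ cross-term vanishes for bidegree reasons (type $(n-1,n+1)=0$), and using $\p\chi_{tu}=\p\chi$ (since $\p^{2}=0$) this gives
\[
\int_M e^{-pu}\chi^{\a-1}_{tu}\wedge\sqrt{-1}\p\bpartial\omega^{n-\a}=p(n-\a)\mathcal{I}_1-(\a-1)(n-\a)\mathcal{I}_2,
\]
where $\mathcal{I}_1=\int_M e^{-pu}\sqrt{-1}\p u\wedge\chi^{\a-1}_{tu}\wedge\omega^{n-\a-1}\wedge\bpartial\omega$ and $\mathcal{I}_2=\int_M e^{-pu}\sqrt{-1}\p\chi\wedge\chi^{\a-2}_{tu}\wedge\omega^{n-\a-1}\wedge\bpartial\omega$. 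Performing only one IBP (rather than two) is crucial: a second IBP would produce a spurious $p^{2}$ term not present on the RHS.

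The $p$-free integral $\mathcal{I}_2$ is estimated pointwise. In coordinates simultaneously diagonalizing $\omega$ and $\chi_{tu}$, its integrand coefficient is a bounded torsion scalar times the partial symmetric polynomial $S_{\a-2;ijk}(\lambda)$ with three indices excluded. Lemma~\ref{gche-lemma-polynomial} gives $|S_{\a-2;ijk}(\lambda)|\le CS_{\a-2}(\lambda)$, producing $|\mathcal{I}_2|\le C\int_M e^{-pu}\chi^{\a-2}_{tu}\wedge\omega^{n-\a+2}$, which is exactly the second RHS term.

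For the leading integral $p\mathcal{I}_1$, the pointwise integrand in diagonal coordinates is $\sum_{i<j}S_{\a-1;ij}(\lambda)\,L_{ij}(u,T)\,\omega^n$, where $L_{ij}$ is bilinear in $\{u_i,u_j\}$ and bounded torsion coefficients. Lemma~\ref{gche-lemma-polynomial} reduces the $(\a-1)$-eigenvalue products in $S_{\a-1;ij}(\lambda)$ to $|\lambda_k|\,S_{\a-2;k}(\lambda)$-type factors for $k\notin\{i,j\}$, and AM--GM then splits each $p|u_j\lambda_k|$ into a $|u_j|^{2}$ piece (contributing to the coefficient of $\sqrt{-1}\p u\wedge\bpartial u\wedge\chi^{\a-2}_{tu}\wedge\omega^{n-\a+1}$, which pointwise equals $\sum_j|u_j|^{2}S_{\a-2;j}(\lambda)$ up to a constant) and a $|\lambda_k|^{2}$ piece. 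Using the algebraic identity $\lambda_kS_{\a-2;k}=S_{\a-1}-S_{\a-1;k}$ together with $t$-integration and Lemma~\ref{gche-lemma-iteration}, the latter piece is reassembled into $C\int_M e^{-pu}\chi^{\a-2}_{tu}\wedge\omega^{n-\a+2}$. The main obstacle is arranging this Cauchy--Schwarz so that the $p$-factor rests exclusively on the first RHS term: a naive AM--GM distributes the $p$ to both pieces, and cancelling the spurious $p$ from the torsion residual requires both the polynomial identities for the elementary symmetric functions and the iterative structure of Lemma~\ref{gche-lemma-iteration} after integrating in $t$.
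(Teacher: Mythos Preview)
Your single integration by parts is the wrong one, and the obstacle you identify at the end is not resolvable with the tools you cite. After your Stokes step the $p$--term is
\[
p\,\mathcal{I}_1 \;=\; p\int_M e^{-pu}\sqrt{-1}\,\p u\wedge\chi^{\a-1}_{tu}\wedge\omega^{n-\a-1}\wedge\bpartial\omega,
\]
which in diagonal coordinates has the shape $p\,|u_i|\cdot\big(\prod_{m\in S}\lambda_m\big)\cdot T$ with $|S|=\a-1$ and bounded torsion $T$. The target right--hand side, however, is $Cp\sum_i|u_i|^2 S_{\a-2;i}+C\,S_{\a-2}$. Any Cauchy--Schwarz on the scalar product $p\,|u_i|\,|\lambda_k|$ either leaves the factor $p$ on \emph{both} halves (giving a residual $p\,|\lambda_k|^2 S_{\a-2;\cdot}$) or shifts it to $p^2|u_i|^2$; neither matches the statement. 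Lemma~\ref{gche-lemma-iteration} only furnishes \emph{lower} bounds of the form $\int_0^1\!\int e^{-pu}\chi^{k}_{tu}\wedge\omega^{n-k}\ge c\int_0^1\!\int e^{-pu}\chi^{k-1}_{tu}\wedge\omega^{n-k+1}$, so it cannot absorb a spurious factor of $p$ from an upper bound. The identity $\lambda_k S_{\a-2;k}=S_{\a-1}-S_{\a-1;k}$ does not help either: it trades one eigenvalue for a higher symmetric function, not for a constant.

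The paper avoids this entirely by first peeling off one factor $\chi_{tu}=\chi+t\sqrt{-1}\,\p\bpartial u$ \emph{before} integrating by parts, and then applying Stokes only to the $t\sqrt{-1}\,\p\bpartial u\wedge\chi^{\a-2}_{tu}\wedge\p\bpartial\omega^{n-\a}$ piece. The $p$--term produced by that IBP is
\[
p\int_M e^{-pu}\,\p u\wedge\bpartial u\wedge\chi^{\a-2}_{tu}\wedge\p\bpartial\omega^{n-\a},
\]
which already carries $\p u\wedge\bpartial u$ and only $\a-2$ eigenvalue factors; Lemma~\ref{gche-lemma-polynomial} (applied at the admissible level $i=\a-2$) then bounds it pointwise by $Cp\,\sqrt{-1}\p u\wedge\bpartial u\wedge\chi^{\a-2}_{tu}\wedge\omega^{n-\a+1}/\omega^n$ with no balancing needed. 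The remaining $p$--free term involves $\bpartial u\wedge\p\chi\wedge\chi^{\a-3}_{tu}$ and is handled by Schwarz plus Lemma~\ref{gche-lemma-iteration}. In short, the decisive step you are missing is to split $\chi^{\a-1}_{tu}$ first so that the IBP manufactures $\p u\wedge\bpartial u$ (two gradient factors) against $\chi^{\a-2}_{tu}$, rather than $\p u$ (one gradient factor) against $\chi^{\a-1}_{tu}$.
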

\begin{proof}
Rewriting,
\begin{equation}
\label{gche-lemma-1-equality-1}
\begin{aligned}
	&\, \int^1_0 \left(\int_M e^{- p u} \chi^{\a - 1}_{t u} \wedge \sqrt{- 1} \p \bpartial \omega^{n - \a} \right) dt \\
	=&\, \int^1_0 \left(\int_M e^{- p u} \chi^{\a - 2}_{t u} \wedge \chi \wedge \sqrt{- 1} \p \bpartial \omega^{n - \a} \right) dt \\
	&\, - \int^1_0 t \left(\int_M e^{- p u}  \p\bpartial u \wedge \chi^{\a - 2}_{t u} \wedge  \p \bpartial \omega^{n - \a} \right) dt.
\end{aligned}
\end{equation}
We apply integration by parts to the second term,
\begin{equation}
\label{gche-lemma-1-equality-2}
\begin{aligned}
	&\,   \int^1_0 t \left(\int_M e^{- p u}  \p\bpartial u \wedge \chi^{\a - 2}_{t u} \wedge \p \bpartial \omega^{n - \a} \right) dt \\
	=&\,  p \int^1_0 t \left(\int_M e^{- p u} \p u \wedge \bpartial u \wedge \chi^{\a - 2}_{t u} \wedge  \p\bpartial \omega^{n - \a}\right) dt \\
	&\, + (\a - 2) \int^1_0 t \left(\int_M e^{- p u}  \bpartial u \wedge \p \chi \wedge \chi^{\a - 3}_{t u} \wedge  \p\bpartial \omega^{n - \a}\right) dt .
\end{aligned}
\end{equation}
For a fixed $t$, we express $X = \chi_{t u}$. For any point $p\in M$, we can find a coordinate chart around $p$ such that $g_{i\bar j} = \delta_{ij}$ and $X_{i\bar j}$ is diagonal at $p$.  
In this paper, we call such local coordinates normal coordinate charts around $p$.
Then we pointwise control the first term in the right of \eqref{gche-lemma-1-equality-1}
\begin{equation}
\begin{aligned}
	\left| \frac{\chi^{\a - 2}_{t u} \wedge \chi \wedge \sqrt{- 1} \p \bpartial \omega^{n - \a}}{\omega^n} \right| 
	\leq&\, \sum_{i_1 < \cdots <i_{\a - 2}} \left( |R_{i_1 \cdots i_{\a - 2}}| \prod_{j \in \{i_1, \cdots, i_{\a - 2}\}} |X_{j\bar j}| \right) \\
	\leq&\, \frac{C \chi^{\a - 2}_{t u} \wedge \omega^{n - \a + 2}}{\omega^n} .
\end{aligned}
\end{equation}
We pointwise control the first term in \eqref{gche-lemma-1-equality-2},
\begin{equation}
\label{gche-lemma-1-equality-3}
\begin{aligned}
	&\, \left|\frac{ \p u \wedge \bpartial u \wedge \chi^{\a - 2}_{t u} \wedge  \p\bpartial \omega^{n - \a}}{\omega^n}\right| \\
	\leq&\, \sum_{\substack{i_1 < \cdots <i_{\a - 2} \\ j,k \neq i_1 ,  \cdots , i_{\a - 2}}} \Bigg(\left|R_{\bar j k i_1 \cdots i_{\a - 2}}\right| \left|u_j\right| \left| u_{\bar k}\right| \prod_{l \in \{i_1,\cdots,i_{\a - 2}\}} \left|X_{l \bar l}\right| \Bigg) .
\end{aligned}
\end{equation}
By Schwarz inequality and Lemma~\ref{gche-lemma-polynomial}, we have
\begin{equation}
\label{gche-lemma-1-equality-4}
\begin{aligned}
	\left|\frac{ \p u \wedge \bpartial u \wedge \chi^{\a - 2}_{t u} \wedge  \p\bpartial \omega^{n - \a}}{\omega^n}\right| 
	\leq&\, C \sum_{\substack{i_1  < \cdots <i_{\a - 2} \\ j \neq i_1 ,  \cdots , i_{\a - 2}}} \Bigg( |u_j|^2  \prod_{l \in \{i_1,\cdots,i_{\a - 2}\}} \left|X_{l \bar l}\right| \Bigg) \\
	\leq&\,  \frac{C \sqrt{-1} \p u \wedge \bpartial u \wedge \chi^{\a - 2}_{t u} \wedge \omega^{n - \a + 1}}{\omega^n} .
\end{aligned}
\end{equation}
We pointwise control the second term in \eqref{gche-lemma-1-equality-2},
\begin{equation}
\label{gche-lemma-1-equality-5}
\begin{aligned}
	&\, \left| \frac{ \bpartial u \wedge \p \chi \wedge \chi^{\a - 3}_{t u} \wedge  \p\bpartial \omega^{n - \a}}{\omega^n} \right| \\
	\leq&\, \sum_{\substack{i_1  < \cdots <i_{\a - 3} \\ j \neq i_1 ,  \cdots , i_{\a - 3}}} \Bigg(\left|R_{ j i_1 \cdots i_{\a - 3}}\right|| u_{\bar j}| \prod_{k \in \{i_1,\cdots,i_{\a - 3}\}} \left|X_{k \bar k}\right| \Bigg).
\end{aligned}
\end{equation}
By Schwarz inequality and Lemma~\ref{gche-lemma-polynomial}, we have
\begin{equation}
\label{gche-lemma-1-equality-6}
\begin{aligned}
	&\, \left| \frac{ \bpartial u \wedge \p \chi \wedge \chi^{\a - 3}_{t u} \wedge \p\bpartial \omega^{n - \a}}{\omega^n} \right| \\
	\leq&\, C \sum_{\substack{i_1 < \cdots <i_{\a - 3} \\ j \neq i_1 ,  \cdots , i_{\a - 3}}} \Bigg(\left(|u_j|^2 + 1\right) \prod_{k \in \{i_1,\cdots,i_{\a - 3}\}} \left|X_{k \bar k}\right| \Bigg) \\
	\leq&\, \frac{C \sqrt{- 1} \p u \wedge \bpartial u \wedge \chi^{\a - 3}_{t u} \wedge \omega^{n - \a + 2}}{\omega^n} + \frac{C \chi^{\a - 3}_{t u} \wedge \omega^{n - \a + 3}}{\omega^n} .
\end{aligned}
\end{equation}
Applying Lemma~\ref{gche-lemma-iteration}, the proof is finished.

\end{proof}


\begin{lemma}
\label{hessian-lemma-estimate}
Let $u$ be a smooth admissible solution of Hessian equation~\eqref{hessian-equation}. Then there are uniform constants $C$, $p_0$ such that for all $p \geq p_0$, inequality~\eqref{gche_main_inequality} holds true. 
\end{lemma}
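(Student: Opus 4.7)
The plan is to multiply the Hessian equation \eqref{hessian-equation} by $e^{-pu}$, integrate over $M$, and use integration by parts to extract a bound on $p\int_M e^{-pu}|\p u|^2_g\omega^n$. Starting from $\chi_u = \chi + \sqrt{-1}\p\bpartial u$ and applying the fundamental theorem of calculus to $t\mapsto\chi_{tu}^\alpha$,
\[
\int_M e^{-pu} f\omega^n = \int_M e^{-pu}\chi^\alpha\wedge\omega^{n-\alpha} + \alpha\int_0^1\int_M e^{-pu}\chi_{tu}^{\alpha-1}\wedge\sqrt{-1}\p\bpartial u\wedge\omega^{n-\alpha}\,dt.
\]
Since $f$ is bounded and $\chi\leq\Lambda\omega$, the left-hand side and the first summand on the right are already controlled by $C\int_M e^{-pu}\omega^n$, so the task reduces to turning the remaining integral into a positive multiple of the desired gradient integral modulo errors of the same form.

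Setting $\Omega_t := \chi_{tu}^{\alpha-1}\wedge\omega^{n-\alpha}$ and applying Stokes to $d(e^{-pu}\sqrt{-1}\bpartial u\wedge\Omega_t)$ yields
\[
\int_M e^{-pu}\sqrt{-1}\p\bpartial u\wedge\Omega_t = p\int_M e^{-pu}\sqrt{-1}\p u\wedge\bpartial u\wedge\Omega_t + \int_M e^{-pu}\sqrt{-1}\bpartial u\wedge\p\Omega_t,
\]
with $\p\Omega_t = (\alpha-1)\p\chi\wedge\chi_{tu}^{\alpha-2}\wedge\omega^{n-\alpha} + \chi_{tu}^{\alpha-1}\wedge\p\omega^{n-\alpha}$ because $\p\chi_{tu} = \p\chi$. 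A second integration by parts in $\bpartial$ on each torsion piece, using $\bpartial e^{-pu} = -pe^{-pu}\bpartial u$, converts the remaining $\bpartial u$ into $-\frac{1}{p}$ times an integral of $\sqrt{-1}\p\bpartial$ applied to the surrounding factors. The cleanest such term, $-\frac{1}{p}\int_M e^{-pu}\chi_{tu}^{\alpha-1}\wedge\sqrt{-1}\p\bpartial\omega^{n-\alpha}$, is exactly the quantity estimated by Lemma \ref{gche-lemma-1}; the remaining hybrid integrals, which involve products of $\p\chi,\bpartial\chi,\p\omega,\bpartial\omega$, are controlled by the same scheme after a pointwise Cauchy--Schwarz step and an application of Zhang's Lemma \ref{gche-lemma-polynomial}, which dominates the mixed eigenvalue products $|\lambda_{j_1}\cdots\lambda_{j_i}|$ of $\chi_{tu}$ by admissible symmetric-function quantities $S_{i;j}(\boldsymbol{\lambda})$.

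Assembling these estimates yields an inequality schematically of the form
\[
p\int_0^1\!\!\int_M e^{-pu}\sqrt{-1}\p u\wedge\bpartial u\wedge\chi_{tu}^{\alpha-1}\wedge\omega^{n-\alpha}\,dt \leq C\int_M e^{-pu}\omega^n + C\int_0^1\!\!\int_M e^{-pu}(\text{lower-degree terms})\,dt,
\]
and Lemma \ref{gche-lemma-iteration} is then iterated to reduce each $\chi_{tu}^k\wedge\omega^{n-k}$ and each $\sqrt{-1}\p u\wedge\bpartial u\wedge\chi_{tu}^{k-1}\wedge\omega^{n-k}$ factor down to $\omega^n$ and $\sqrt{-1}\p u\wedge\bpartial u\wedge\omega^{n-1}$ respectively, each iteration supplying a strictly positive constant from the cone bound $\chi-\lambda\omega\in\Gamma^\alpha_\omega$. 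After choosing $p_0$ large enough to absorb the $p$-independent constants, one obtains $p\int_M e^{-pu}\sqrt{-1}\p u\wedge\bpartial u\wedge\omega^{n-1} \leq C\int_M e^{-pu}\omega^n$, which is \eqref{gche_main_inequality} since $|\p e^{-pu/2}|^2_g\omega^n$ is a fixed dimensional multiple of $p^2e^{-pu}\sqrt{-1}\p u\wedge\bpartial u\wedge\omega^{n-1}$. The main obstacle throughout is the bookkeeping of the hybrid torsion integrals produced by the second integration by parts: unlike in the K\"ahler case, $\p\omega^{n-\alpha}$ and $\p\bpartial\omega^{n-\alpha}$ do not vanish, and Zhang's Lemma \ref{gche-lemma-polynomial} is indispensable in bounding the ensuing mixed-eigenvalue integrands by admissible forms compatible with Lemma \ref{gche-lemma-iteration}.
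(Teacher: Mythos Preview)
Your outline is essentially the paper's strategy, but there is a real gap in the step where you say ``Lemma~\ref{gche-lemma-iteration} is then iterated to reduce each $\chi_{tu}^k\wedge\omega^{n-k}$ \ldots\ down to $\omega^n$.'' The second inequality of Lemma~\ref{gche-lemma-iteration} reads
\[
\int_0^1\!\!\int_M e^{-pu}\chi_{tu}^k\wedge\omega^{n-k}\,dt \;\geq\; \frac{k\lambda}{k+1}\int_0^1\!\!\int_M e^{-pu}\chi_{tu}^{k-1}\wedge\omega^{n-k+1}\,dt,
\]
which is a \emph{lower} bound on the higher-degree form. It lets you push the exponent of $\chi_{tu}$ \emph{up}, not down; it does not give $\int e^{-pu}\chi_{tu}^{\a-2}\wedge\omega^{n-\a+2}\leq C\int e^{-pu}\omega^n$. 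Iterating upward to $\int_0^1\int_M e^{-pu}\chi_{tu}^\a\wedge\omega^{n-\a}\,dt$ does not help either, since $S_\a((1-t)\chi+t\chi_u)$ is not controlled by $S_\a(\chi)$ and $S_\a(\chi_u)=f$ (take $\a=2$ and diagonal matrices $(R,1/R)$, $(1/R,R)$ to see that the determinant of the midpoint blows up). So the non-gradient error terms of the form $\frac{C}{p}\int_0^1\int_M e^{-pu}\chi_{tu}^{\a-2}\wedge\omega^{n-\a+2}\,dt$, which inevitably appear both from Lemma~\ref{gche-lemma-1} and from the Cauchy--Schwarz estimate on the $\bpartial u\wedge\p\chi$ piece, are left uncontrolled in your argument.

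The paper closes this gap with one more round of the same trick (see \eqref{gche-hessian-integral-6}--\eqref{gche-hessian-integral-7}): write $\chi_{tu}^{\a-2}=\chi^{\a-2}+(\a-2)\int_0^t\chi_{su}^{\a-3}\wedge\sqrt{-1}\p\bpartial u\,ds$ and integrate by parts once more, converting $\frac{1}{p}e^{-pu}\sqrt{-1}\p\bpartial u$ back into a gradient integrand $e^{-pu}\sqrt{-1}\p u\wedge\bpartial u$ with no $1/p$. This yields
\[
\frac{1}{p}\int_0^1\!\!\int_M e^{-pu}\chi_{tu}^{\a-2}\wedge\omega^{n-\a+2}\,dt \;\leq\; C\int_0^1\!\!\int_M e^{-pu}\sqrt{-1}\p u\wedge\bpartial u\wedge\chi_{tu}^{\a-3}\wedge\omega^{n-\a+2}\,dt \;+\; \frac{C}{p}\int_M e^{-pu}\omega^n,
\]
where the residual $\frac{1}{p^2}$-term produced along the way is pushed \emph{up} one degree by Lemma~\ref{gche-lemma-iteration} and then absorbed into the left side for $p$ large. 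The resulting gradient error (with $\chi_{tu}^{\a-3}$) is likewise pushed up via Lemma~\ref{gche-lemma-iteration} and absorbed by the main positive term $\frac{\lambda p}{2}\int e^{-pu}\sqrt{-1}\p u\wedge\bpartial u\wedge\chi_{tu}^{\a-2}\wedge\omega^{n-\a+1}$ once $p$ is large. This extra integration-by-parts step is the missing idea; without it the argument does not close.
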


\begin{proof}

Without loss of generality, it is reasonable to assume that $p \gg 1$. We consider the following inequality,
\begin{equation}
\label{gche-hessian-integral}
I:= \int_M e^{- p u} \left(\chi^\a_u \wedge \omega^{n - \a} - \chi^\a \wedge \omega^{n - \a}\right).
\end{equation}
It is easy to see that $I \leq C_1\int_M e^{- p u} \omega^n$ for some uniform constant $C_1$.

We rewrite the integral,
\begin{equation}
\label{gche-hessian-integral-new-form}
	I = \a \int^1_0\left(\int_M e^{- p u} \sqrt{- 1} \p\bpartial u \wedge \chi^{\a - 1}_{t u} \wedge \omega^{n - \a}\right) dt ,
\end{equation}
and then calculate it directly,
\begin{equation}
\begin{aligned}
	I
	&=  \a p \int^1_0 \left(\int_M e^{- p u} \sqrt{-1} \p u \wedge\bpartial u \wedge \chi^{\a - 1}_{t u} \wedge \omega^{n - \a}\right) dt \\
	&\quad + \a (\a - 1) \int^1_0 \left(\int_M e^{- p u} \sqrt{- 1} \bpartial u \wedge \p \chi \wedge \chi^{\a - 2}_{t u} \wedge \omega^{n - \a}\right) dt \\
	&\quad + \frac{\a}{p}\int^1_0 \left(\int_M e^{- p u} \sqrt{- 1} \bpartial \chi^{\a - 1}_{t u} \wedge \p \omega^{n - \a}\right) dt \\
	&\quad - \frac{\a}{p}\int^1_0 \left(\int_M e^{- p u}  \chi^{\a - 1}_{t u} \wedge \sqrt{- 1} \p \bpartial \omega^{n - \a} \right) dt .
\end{aligned}
\end{equation}
By integration by parts for the third term,
\begin{equation}
\label{gche-hessian-integral-1}
\begin{aligned}
	I &=  \a p \int^1_0 \left(\int_M e^{- p u} \sqrt{-1} \p u \wedge\bpartial u \wedge \chi^{\a - 1}_{t u} \wedge \omega^{n - \a}\right) dt \\
	&\quad + \a (\a - 1) \int^1_0 \left(\int_M e^{- p u} \sqrt{- 1} \bpartial u \wedge \p \chi \wedge \chi^{\a - 2}_{t u} \wedge \omega^{n - \a}\right) dt \\
	&\quad - \a (\a - 1) \int^1_0 \left(\int_M e^{- p u} \sqrt{- 1} \p u \wedge \bpartial \chi \wedge \chi^{\a - 2}_{t u} \wedge \omega^{n - \a}\right) dt \\
	&\quad + \frac{\a (\a - 1)}{p}\int^1_0 \left(\int_M e^{- p u} \sqrt{- 1} \p\bpartial \chi \wedge \chi^{\a - 2}_{t u} \wedge \omega^{n - \a}\right) dt \\
	&\quad - \frac{\a (\a - 1) (\a - 2)}{p}\int^1_0 \left(\int_M e^{- p u} \sqrt{- 1} \bpartial \chi \wedge \p \chi \wedge \chi^{\a - 3}_{t u} \wedge \omega^{n - \a}\right) dt\\
	&\quad - \frac{\a}{p}\int^1_0 \left(\int_M e^{- p u}  \chi^{\a - 1}_{t u} \wedge \sqrt{- 1} \p \bpartial \omega^{n - \a} \right) dt .
\end{aligned}
\end{equation}
We can pointwise control four terms in the right hand side of \eqref{gche-hessian-integral-1}. By Schwarz's inequality, for any $\epsilon > 0$ there is a uniform constant $C$ such that
\begin{equation}
\label{gche-hessian-inequality-1}
\begin{aligned}
    &\hspace{-2em} \left|\frac{\sqrt{-1} \left(\bpartial u \wedge \p \chi - \p u \wedge \bpartial \chi \right)\wedge \chi^{\a - 2}_{t u} \wedge \omega^{n - \a}}{\omega^n}\right| \\
	&\leq \frac{C}{\epsilon} \frac{\sqrt{-1} \p u \wedge \bpartial u \wedge \omega^{\a - 2}_{t u} \wedge \omega^{n - \a + 1}}{\omega^n} + \epsilon \frac{\chi^{\a - 2}_{t u} \wedge \omega^{n - \a + 2}}{\omega^n} .
\end{aligned}
\end{equation}
Also\begin{equation}
\label{gche-hessian-inequality-2}
	\left|\frac{\sqrt{- 1} \p\bpartial \chi \wedge \chi^{\a - 2}_{t u} \wedge \omega^{n - \a}}{\omega^n} \right| \leq C \frac{\chi^{\a - 2}_{t u} \wedge \omega^{n - \a}}{\omega^n} ,
\end{equation}
and
\begin{equation}
\label{gche-hessian-inequality-3}
	\left|\frac{\sqrt{- 1} \bpartial\chi \wedge \p\chi \wedge \chi^{\a - 3}_{t u} \wedge \omega^{n - \a}}{\omega^n}\right| \leq C \frac{\chi^{\a - 3}_{t u} \wedge \omega^{n - \a + 3}}{\omega^n} .
\end{equation}
Combining \eqref{gche-hessian-integral-1}, \eqref{gche-hessian-inequality-1}, \eqref{gche-hessian-inequality-2} and \eqref{gche-hessian-inequality-3},
\begin{equation}
\label{gche-hessian-integral-2}
\begin{aligned}
	I &\geq  \a p \int^1_0 \left(\int_M e^{- p u} \sqrt{-1} \p u \wedge\bpartial u \wedge \chi^{\a - 1}_{t u} \wedge \omega^{n - \a}\right) dt \\
	&\quad - \epsilon_1 p \int^1_0 \left(\int_M e^{- p u} \sqrt{- 1} \p u \wedge \bpartial u \wedge \chi^{\a - 2}_{t u} \wedge \omega^{n - \a + 1}\right) dt \\
	&\quad - \frac{C_2}{p}\int^1_0 \left(\int_M e^{- p u} \chi^{\a - 2}_{t u} \wedge \omega^{n - \a + 2}\right) dt \\
	&\quad - \frac{C_3}{p}\int^1_0 \left(\int_M e^{- p u} \chi^{\a - 3}_{t u} \wedge \omega^{n - \a + 3}\right) dt\\
	&\quad - \frac{\a}{p}\int^1_0 \left(\int_M e^{- p u}  \chi^{\a - 1}_{t u} \wedge \sqrt{- 1} \p \bpartial \omega^{n - \a} \right) dt .
\end{aligned}
\end{equation}
where $\epsilon_1$ is to be determined later. By Lemma~\ref{gche-lemma-iteration}, we know that if $\epsilon_1 \leq \frac{\lambda}{2}$,
\begin{equation}
\label{gche-hessian-integral-3}
\begin{aligned}
	I &\geq  \frac{\lambda p}{2} \int^1_0 \left(\int_M e^{- p u} \sqrt{- 1} \p u \wedge \bpartial u \wedge \chi^{\a - 2}_{t u} \wedge \omega^{n - \a + 1}\right) dt \\
	&\quad - \frac{C_4}{p}\int^1_0 \left(\int_M e^{- p u} \chi^{\a - 2}_{t u} \wedge \omega^{n - \a + 2}\right) dt \\
	&\quad - \frac{\a}{p}\int^1_0 \left(\int_M e^{- p u}  \chi^{\a - 1}_{t u} \wedge \sqrt{- 1} \p \bpartial \omega^{n - \a} \right) dt .
\end{aligned}
\end{equation}

If $\a = 2$, we calculate the last term in \eqref{gche-hessian-integral-3},
\begin{equation}
\begin{aligned}
	&\, \int^1_0 \left(\int_M e^{- p u}  \chi_{t u} \wedge \sqrt{- 1} \p \bpartial \omega^{n - 2} \right) dt \\
	=&\,  \int^1_0  \left(\int_M e^{- p u}  \chi \wedge \sqrt{- 1} \p \bpartial \omega^{n - 2} \right) dt \\
	&\, + \frac{p}{2} \int_M e^{- p u} \sqrt{- 1} \p u \wedge \bpartial u \wedge  \sqrt{- 1} \p \bpartial \omega^{n - 2} 
\end{aligned}
\end{equation}
and thus
\begin{equation}
\label{gche-hessian-integral-4}
\begin{aligned}
	I &\geq  \frac{\lambda p}{2} \int_M e^{- p u} \sqrt{- 1} \p u \wedge \bpartial u \wedge \omega^{n - 1} - \frac{C_4}{p} \int_M e^{- p u} \omega^{n} \\
	&\quad - \frac{2}{p}\int^1_0 \left(\int_M e^{- p u}  \chi_{t u} \wedge \sqrt{- 1} \p \bpartial \omega^{n - 2} \right) dt \\
	&\geq \frac{\lambda p}{3} \int_M e^{- p u} \sqrt{- 1} \p u \wedge \bpartial u \wedge \omega^{n - 1} - \frac{C_5}{p} \int_M e^{- p u} \omega^n
\end{aligned}
\end{equation}
if $p$ is large enough.

If $\a \geq 3$, we control the last term in  \eqref{gche-hessian-integral-3}. By Lemma~\ref{gche-lemma-1},
\begin{equation}
\label{gche-hessian-integral-5}
\begin{aligned}
	I &\geq  \frac{\lambda p}{2} \int^1_0 \left(\int_M e^{- p u} \sqrt{- 1} \p u \wedge \bpartial u \wedge \chi^{\a - 2}_{t u} \wedge \omega^{n - \a + 1}\right) dt \\
	&\quad - C_6 \int^1_0 \left(\int_M e^{- p u} \sqrt{- 1} \p u \wedge \bpartial u \wedge \chi^{\a - 2}_{t u} \wedge \omega^{n - \a + 1}\right) dt \\
	&\quad  - \frac{C_6}{p} \int^1_0 \left(\int_M e^{- p u} \chi^{\a - 2}_{t u} \wedge \omega^{n - \a + 2}\right) dt  .
\end{aligned}
\end{equation}
To control the last term in \eqref{gche-hessian-integral-5}, we compute
\begin{equation}
\label{gche-hessian-integral-6}
\begin{aligned}
	 & \frac{1}{p}\int^1_0 \left(\int_M e^{- p u} \chi^{\a - 2}_{t u} \wedge \omega^{n - \a + 2}\right) dt \\
	 =& (\a - 2) \int^1_0 \int^t_0 \left(\int_M e^{- p u} \sqrt{- 1} \p u \wedge \bpartial u \wedge \chi^{\a - 3}_{su} \wedge \omega^{n - \a + 2}\right) ds dt \\
	 & + \frac{\a - 2}{p} \int^1_0 \int^t_0 \left(\int_M e^{- p u} \sqrt{- 1} \bpartial u \wedge \p (\chi^{\a - 3}_{s u} \wedge \omega^{n - \a + 2})\right) ds dt \\
	 & + \frac{1}{p} \int_M e^{- p u} \chi^{\a - 2} \wedge \omega^{n - \a + 2} \\
	 \leq& C_7 \int^1_0 \left(\int_M e^{- p u} \sqrt{- 1} \p u \wedge \bpartial u \wedge \chi^{\a - 3}_{su} \wedge \omega^{n - \a + 2}\right) ds \\
	 & + \frac{1}{p^2} \int^1_0 \left(\int_M e^{- p u} \chi^{\a - 3}_{s u} \wedge \omega^{n - \a + 3}\right) ds  + \frac{1}{p} \int_M e^{- p u} \chi^{\a - 2} \wedge \omega^{n - \a + 2}.
\end{aligned} 
\end{equation}
Consequently, for sufficiently large $p$,
\begin{equation}
\label{gche-hessian-integral-7}
\begin{aligned}
	&\qquad \frac{1}{p}\int^1_0 \left(\int_M e^{- p u} \chi^{\a - 2}_{t u} \wedge \omega^{n - \a + 2}\right) dt \\
	&\leq 2 C_7   \int^1_0 \left(\int_M e^{- p u} \sqrt{- 1} \p u \wedge \bpartial u \wedge \chi^{\a - 3}_{t u} \wedge \omega^{n - \a + 2}\right) dt  \\
	&\hspace{6em} + \frac{2}{p} \int_M e^{- p u} \chi^{\a - 2} \wedge \omega^{n - \a + 2}
\end{aligned}
\end{equation}
and hence from \eqref{gche-hessian-integral-5},
\begin{equation}
\label{gche-hessian-integral-8}
\begin{aligned}
	I \geq \frac{\lambda p}{3} \int^1_0 \left(\int_M e^{- p u} \sqrt{- 1} \p u \wedge \bpartial u \wedge \chi^{\a - 2}_{t u} \wedge \omega^{n - \a + 1}\right) dt -\frac{C_8}{p} \int_M e^{- p u} \omega^n.
\end{aligned}
\end{equation}
As in \cite{Sun2014e}, we can show that for some $c_0 > 0$,
\begin{equation}
\label{gche-hessian-integral-9}
	\frac{\lambda p}{3} \int^1_0 \left(\int_M e^{- p u} \sqrt{- 1} \p u \wedge \bpartial u \wedge \chi^{\a - 2}_{t u} \wedge \omega^{n - \a + 1}\right) dt \geq \frac{4 c_0}{n p} \int_M |\p e^{-\frac{p}{2} u}|^2_g \omega^n .
\end{equation}

\end{proof}


\begin{lemma}
\label{quotient-lemma-estimate}
Let $u$ be a smooth admissible solution to quotient equation~\eqref{quotient-equation}. Then there are uniform constants $C$, $p_0$ such that for all $p \geq p_0$, inequality~\eqref{gche_main_inequality} holds true.
\end{lemma}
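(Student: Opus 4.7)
The plan is to adapt the strategy of Lemma~\ref{hessian-lemma-estimate} by replacing the single test integral \eqref{gche-hessian-integral} with a \emph{difference} that is forced to cancel by the quotient equation, and then to extract positivity from the resulting combined integrand using the cone condition. First, since $[\chi]\in\mathscr{C}_{\a,\b}(f)$, I would pick a representative $\chi'\in\Gamma^\a_\o\cap[\chi]$ that is in the cone subject to $f$, write $\chi' = \chi + \sqrt{-1}\p\bpartial\v$ for a smooth function $\v$, and replace $u$ by $u-\v$. After this reduction we may assume $\chi$ itself satisfies $\a\chi^{\a-1}\wedge\o^{n-\a} - \b f\chi^{\b-1}\wedge\o^{n-\b}\geq \d_0\,\o^{n-1}$ pointwise, with $\chi - \lambda\o\in\Gamma^\a_\o$, for uniform $\d_0,\lambda>0$.

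Next I would introduce the test integral
\[
I := \int_M e^{-pu}\bigl[\chi^\a_u\wedge\o^{n-\a} - \chi^\a\wedge\o^{n-\a}\bigr] - \int_M e^{-pu}f\bigl[\chi^\b_u\wedge\o^{n-\b} - \chi^\b\wedge\o^{n-\b}\bigr],
\]
the natural quotient analog of \eqref{gche-hessian-integral}. The quotient equation forces the two $\chi_u$-pieces to cancel, so that $I = -\int_M e^{-pu}\bigl[\chi^\a\wedge\o^{n-\a} - f\chi^\b\wedge\o^{n-\b}\bigr]$ and hence $|I|\leq C_1\int_M e^{-pu}\o^n$. Writing $\chi^k_u - \chi^k = k\int_0^1\sqrt{-1}\p\bpartial u\wedge\chi^{k-1}_{tu}\,dt$ recasts $I$ as
\[
I = \int_0^1\!\!\int_M e^{-pu}\sqrt{-1}\p\bpartial u\wedge\Psi_t\,dt,\qquad \Psi_t := \a\chi^{\a-1}_{tu}\wedge\o^{n-\a} - \b f\chi^{\b-1}_{tu}\wedge\o^{n-\b}.
\]
Integrating by parts exactly as in \eqref{gche-hessian-integral-1} isolates the principal term $p\int_0^1\int_M e^{-pu}\sqrt{-1}\p u\wedge\bpartial u\wedge\Psi_t\,dt$, with a remainder consisting of torsion contributions involving $\p\chi$, $\p f$, $\p\o$ and $\p\bpartial\o$ (now present on both the $\a$- and $\b$-chains).

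For the error terms I would follow the template of Lemma~\ref{gche-lemma-1} and the derivation of \eqref{gche-hessian-integral-8}: Schwarz's inequality combined with Lemma~\ref{gche-lemma-polynomial} controls each torsion piece by $\e$ times the principal quantity plus $\frac{C}{p}\int_M e^{-pu}\o^n$, and Lemma~\ref{gche-lemma-iteration} (applied separately along the $\a$- and $\b$-chains, using $\chi - \lambda\o\in\Gamma^\a_\o$) allows us to iterate down and absorb. The new ingredient is the extraction of positivity from $\Psi_t$: expanding $\chi^{k-1}_{tu} = \sum_{j\geq 0}\binom{k-1}{j}t^j\chi^{k-1-j}\wedge(\sqrt{-1}\p\bpartial u)^j$ splits $\Psi_t$ into its $j=0$ cone part, which is $\geq\d_0\o^{n-1}$ and contributes at least $c\,p\int_M e^{-pu}|\p u|_g^2\,\o^n$, plus higher-order pieces carrying at least one factor of $\sqrt{-1}\p\bpartial u$; each such factor is integrated by parts once more, yielding either an additional $p$-weighted gradient term (absorbable since $t\leq 1$ supplies a small prefactor) or lower-order torsion errors controlled as before. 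Combining everything with the computation \eqref{gche-hessian-integral-9} then produces \eqref{gche_main_inequality}.

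The chief difficulty is this last step. In the Hessian case $\chi^{\a-1}_{tu}\wedge\o^{n-\a}$ is automatically a positive $(n-1,n-1)$-form for every $t\in[0,1]$ because $\chi_{tu}\in\Gamma^\a_\o$. Here the combined form $\Psi_t$ can fail pointwise positivity for $t>0$, and only at $t=0$ does the cone condition guarantee $\Psi_0\geq\d_0\o^{n-1}$. Transporting this endpoint positivity to the full $t$-integral while retaining the factor $p$ is the heart of the argument and is what forces the careful iterated integration by parts outlined above.
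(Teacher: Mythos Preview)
Your setup is correct and matches the paper: the same difference integral $I$, the same initial integration by parts producing the principal term
\[
p\int_0^1\!\!\int_M e^{-pu}\sqrt{-1}\p u\wedge\bpartial u\wedge\Psi_t\,dt,\qquad \Psi_t=\a\chi^{\a-1}_{tu}\wedge\o^{n-\a}-\b f\chi^{\b-1}_{tu}\wedge\o^{n-\b},
\]
and the same control of the torsion errors via Lemmas~\ref{gche-lemma-polynomial}--\ref{gche-lemma-1}. You also correctly isolate the real difficulty: $\Psi_t$ need not be pointwise nonnegative for $t\in(0,1)$.

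The gap is in your proposed resolution of that difficulty. After the binomial expansion of $\Psi_t$, the pieces with $j\ge 1$ contribute terms of the form
\[
p\int_0^1 t^j\!\!\int_M e^{-pu}\sqrt{-1}\p u\wedge\bpartial u\wedge A_j\wedge(\sqrt{-1}\p\bpartial u)^j\wedge\o^{n-k}\,dt,
\]
and these are \emph{not} lower order: they are of exactly the same size as the principal term itself (indeed, summed over $j$ they reconstitute it). The claim that ``$t\le 1$ supplies a small prefactor'' is false, since $\int_0^1 t^j\,dt=\frac{1}{j+1}$ is merely $O(1)$. Moreover, once the integrand already contains $\p u\wedge\bpartial u$, a further integration by parts on a factor $\sqrt{-1}\p\bpartial u$ does not reduce anything: moving $\p$ (or $\bpartial$) onto $e^{-pu}$ produces $\p u\wedge\p u=0$ (resp.\ $\bpartial u\wedge\bpartial u=0$), so Stokes gives no simplification, and the remaining contributions regenerate the same term or worse. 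In the model case $\a=2$, $\b=1$ the single $j=1$ term equals $p\int_M e^{-pu}\sqrt{-1}\p u\wedge\bpartial u\wedge(\chi_u-\chi)\wedge\o^{n-2}$, which is a difference of two nonnegative quantities each comparable to the principal term and cannot be absorbed by the $\Psi_0$ part.

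The paper avoids this entirely by exploiting \emph{concavity of the quotient} $(S_\a/S_\b)^{1/(\a-\b)}$ on $\Gamma^\a$ (together with the monotonicity you already noted at $t=1$). From \cite{Sun2014e} one gets directly
\[
\int_0^1\!\!\int_M e^{-pu}\sqrt{-1}\p u\wedge\bpartial u\wedge\Psi_t\,dt\ \ge\ c_1\int_M e^{-pu}\sqrt{-1}\p u\wedge\bpartial u\wedge\o^{n-1},
\]
and, for $0\le t\le\tfrac12$, the pointwise bound $\Psi_t>c_2\,\chi^{\a-1}_{tu}\wedge\o^{n-\a}$. The error terms at level $\chi^{\a-2}_{tu}$ over $[0,1]$ are then transferred to $[0,\tfrac12]$ by the concavity of $S_{\a-2}$ (giving a factor $2^{\a-1}$), pushed up to level $\chi^{\a-1}_{tu}$ by Lemma~\ref{gche-lemma-iteration}, and finally absorbed into $\Psi_t$ on $[0,\tfrac12]$ via the pointwise bound. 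This concavity mechanism is the missing idea in your outline; the binomial/IBP scheme cannot substitute for it.
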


\begin{proof}

Without loss of generality, we can assume
\begin{equation}
	\a \chi^{\a - 1} \wedge \omega^{n - \a} > \b f \chi^{\b - 1} \wedge \omega^{n - \b} .
\end{equation}
Also, by the monotony of $S_\a/S_\b$, we have
\begin{equation}
	\a \chi^{\a - 1}_u \wedge \omega^{n - \a} > \b f \chi^{\b - 1}_u \wedge \omega^{n - \b}.
\end{equation}

We consider the integral
\begin{equation}
	I := \int_M e^{- p u} \left((\chi^\a_u \wedge \omega^{n - \a} - \chi^\a \wedge \omega^{n - \a}) - f (\chi^\b_u \wedge \omega^{n - \b} - \chi^\b \wedge \omega^{n - \b})\right) .
\end{equation}

It is not hard to see
\begin{equation}
	I \leq C_1 \int_M e^{- p u} \omega^n ,
\end{equation}
for some uniform constant $C_1 > 0$.

Computing $I$ directly,
\begin{equation}
\begin{aligned}
	I =&\, p \int^1_0 \left(\int_M e^{- p u} \sqrt{- 1} \p u  \wedge \bpartial u \wedge (\a \chi^{\a - 1}_{t u} \wedge \omega^{n - \a} - \b f \chi^{\b - 1}_{t u} \wedge \omega^{n - \b}) \right) dt \\
	&\,  + \a (\a - 1)\int^1_0 \left(\int_M e^{- p u} \sqrt{- 1} \bpartial u \wedge \p \chi \wedge \chi^{\a - 2}_{t u} \wedge \omega^{n - \a} \right) dt \\
	&\,  - \a (\a - 1)\int^1_0 \left(\int_M e^{- p u} \sqrt{- 1} \p u \wedge \bpartial \chi \wedge \chi^{\a - 2}_{t u} \wedge \omega^{n - \a} \right) dt \\
	&\,  + \frac{\a}{p} \int^1_0 \left(\int_M  e^{- p u} \sqrt{- 1} \p\bpartial \chi^{\a - 1}_{t u} \wedge \omega^{n - \a}\right) dt \\
	&\,  - \frac{\a}{p} \int^1_0 \left(\int_M e^{- p u} \chi^{\a - 1}_{t u} \wedge \sqrt{- 1} \p \bpartial \omega^{n - \a}\right) dt \\
	&\, - \beta \int^1_0 \left(\int_M e^{- p u} \sqrt{- 1} \bpartial u \wedge \p f \wedge \chi^{\b - 1}_{t u} \wedge \omega^{n - \b}\right) dt \\
	&\, - \beta \int^1_0 \left(\int_M e^{- p u} f \sqrt{- 1} \bpartial u \wedge \p (\chi^{\beta - 1}_{t u} \wedge \omega^{n - \beta})\right) dt .
\end{aligned}
\end{equation}
Therefore, as in the proof of Lemma~\ref{hessian-lemma-estimate},
\begin{equation}
\label{gche-quotient-integral-1}
\begin{aligned}
	I 
	&\geq p \int^1_0 \left(\int_M e^{- p u} \sqrt{- 1} \p u  \wedge \bpartial u \wedge (\a \chi^{\a - 1}_{t u} \wedge \omega^{n - \a} - \b f \chi^{\b - 1}_{t u} \wedge \omega^{n - \b}) \right) dt \\
	&\quad - \epsilon_1 p \int^1_0 \left(\int_M e^{- p u} \sqrt{- 1} \p u \wedge \bpartial u \wedge \chi^{\a - 2}_{t u} \wedge \omega^{n - \a + 1} \right) dt \\
	&\quad - \frac{C_2}{p} \int^1_0 \left(\int_M e^{- p u} \chi^{\a - 2}_{t u} \wedge \omega^{n - \a + 2}\right) dt \\
	&\quad - \frac{\a}{p} \int^1_0 \left(\int_M e^{- p u} \chi^{\a - 1}_{t u} \wedge  \sqrt{- 1} \p \bpartial \omega^{n - \a}\right) dt .
\end{aligned}
\end{equation}

If $\a = 2$, then $\b = 1$.
\begin{equation}
\label{gche-quotient-integral-2}
\begin{aligned}
	I &\geq p \int^1_0 \left(\int_M e^{- p u} \sqrt{- 1} \p u  \wedge \bpartial u \wedge (2 \chi_{t u} \wedge \omega^{n - 2} - f \omega^{n - 1}) \right) dt \\
	&\quad - \epsilon_1 p \int_M e^{- p u} \sqrt{- 1} \p u \wedge \bpartial u \wedge \omega^{n - 1} - \frac{C_2}{p} \int_M e^{- p u} \omega^{n}\\
	&\quad - \frac{2}{p} \int^1_0 \left(\int_M e^{- p u} \chi_{t u} \wedge \sqrt{- 1} \p \bpartial \omega^{n - 2}\right) dt 
\end{aligned}
\end{equation}
where
\begin{equation}
\label{gche-quotient-integral-3}
\begin{aligned}
	& - \frac{2}{p} \int^1_0 \left(\int_M e^{- p u} \chi_{t u} \wedge \sqrt{- 1} \p \bpartial \omega^{n - 2}\right) dt  \\
	= & - \frac{2}{p} \int_M e^{- p u} \chi \wedge \sqrt{- 1} \p \bpartial \omega^{n - 2} + \int_M  e^{- p u} \p u \wedge \bpartial u \wedge \p\bpartial \omega^{n - 2} \\
	\geq& - \frac{C_3}{p} \int_M e^{- p u} \omega^n - C_4 \int_M e^{- p u} \sqrt{- 1} \p u \wedge \bpartial u \wedge \omega^{n - 1} .
\end{aligned}
\end{equation}
As proven in \cite{Sun2014e}, there exists $c_1 > 0$,
\begin{equation}
\label{gche-quotient-integral-4}
\begin{aligned}
	&\int^1_0 \left(\int_M e^{- p u} \sqrt{- 1} \p u  \wedge \bpartial u \wedge (2 \chi_{t u} \wedge \omega^{n - 2} - f \omega^{n - 1}) \right) dt \\
	&\hspace{12em} \geq c_1 \int_M e^{- p u} \sqrt{- 1} \p u \wedge \bpartial u \wedge \omega^{n - 1}.
\end{aligned}
\end{equation}
Choosing $\epsilon_1 \leq \frac{c_1}{3}$ and then setting $p$ so large that $c_1 p > 6 C_4$, we have
\begin{equation}
\label{gche-quotient-integral-5}
	I \geq \frac{c_1 p}{2} \int_M e^{- p u} \sqrt{- 1} \p u \wedge \bpartial u \wedge \omega^{n - 1} - \frac{C_2 + C_3}{p} \int_M e^{- p u} \omega^n.
\end{equation}

If $\a \geq 3$, we use Lemma~\ref{gche-lemma-1} to control the last term in \eqref{gche-quotient-integral-1}.
\begin{equation}
\label{gche-quotient-integral-11}
\begin{aligned}
	I &= \int^1_0 \left(\int_M e^{- p u} \sqrt{- 1} \p\bpartial u \wedge (\a \chi^{\a - 1}_{t u} \wedge \omega^{n - \a} - \b f \chi^{\b - 1}_{t u} \wedge \omega^{n - \b}) \right) dt \\
	&\geq p \int^1_0 \left(\int_M e^{- p u} \sqrt{- 1} \p u  \wedge \bpartial u \wedge (\a \chi^{\a - 1}_{t u} \wedge \omega^{n - \a} - \b f \chi^{\b - 1}_{t u} \wedge \omega^{n - \b}) \right) dt \\
	&\quad - \epsilon_1 p \int^1_0 \left(\int_M e^{- p u} \sqrt{- 1} \p u \wedge \bpartial u \wedge \chi^{\a - 2}_{t u} \wedge \omega^{n - \a + 1} \right) dt \\
	&\quad - \frac{(C_2 + C_5)}{p} \int^1_0 \left(\int_M e^{- p u} \chi^{\a - 2}_{t u} \wedge \omega^{n - \a + 2}\right) dt \\
	&\quad - C _5 \int^1_0 \left(\int_M e^{- p u} \sqrt{- 1} \p u \wedge \bpartial u \wedge \chi^{\a - 2}_{t u} \wedge \omega^{n - \a + 1}\right) dt .
	\end{aligned}
\end{equation}
By \eqref{gche-hessian-integral-7},
\begin{equation}
\label{gche-quotient-integral-6}
\begin{aligned}
	I 
	&\geq p \int^1_0 \left(\int_M e^{- p u} \sqrt{- 1} \p u  \wedge \bpartial u \wedge (\a \chi^{\a - 1}_{t u} \wedge \omega^{n - \a} - \b f \chi^{\b - 1}_{t u} \wedge \omega^{n - \b}) \right) dt \\
	&\quad - \epsilon_1 p \int^1_0 \left(\int_M e^{- p u} \sqrt{- 1} \p u \wedge \bpartial u \wedge \chi^{\a - 2}_{t u} \wedge \omega^{n - \a + 1} \right) dt \\
	&\quad - C_6 \int^1_0 \left(\int_M e^{- p u} \sqrt{- 1} \p u \wedge \bpartial u \wedge \chi^{\a - 2}_{t u} \wedge \omega^{n - \a + 1} \right) dt - \frac{C_7}{p} \int_M e^{- p u} \omega^n .
\end{aligned}
\end{equation}
Note that $C_6$ and $C_7$ depend on $\epsilon_1$. 
The concavity of hyperbolic polynomials implies that
\begin{equation}
\label{gche-quotient-integral-7}
\begin{aligned}
	&\quad \int^1_0 \left(\int_M e^{- p u} \sqrt{- 1} \p u \wedge \bpartial u \wedge \chi^{\a - 2}_{t u} \wedge \omega^{n - \a + 1}\right) dt \\
	&\leq 2^{\a - 1} \int^{\frac{1}{2}}_0 \left(\int_M e^{- p u} \sqrt{- 1} \p u \wedge \bpartial u \wedge \chi^{\a - 2}_{t u} \wedge \omega^{n - \a + 1}\right) dt .
\end{aligned}
\end{equation} 
Moreover, the concavity of quotient functions implies that, there are constants $c_1 > 0$ and $c_2 > 0$ such that
\begin{equation}
\label{gche-quotient-integral-8}
\begin{aligned}
	&\qquad \int^1_0 \left(\int_M e^{- p u} \sqrt{- 1} \p u  \wedge \bpartial u \wedge (\a \chi^{\a - 1}_{t u} \wedge \omega^{n - \a} - \b f \chi^{\b - 1}_{t u} \wedge \omega^{n - \b}) \right) dt \\
	&\geq c_1 \int_M e^{- p u} \sqrt{- 1} \p u \wedge \bpartial u \wedge \omega^{n - 1},
\end{aligned}
\end{equation}
and for $0 \leq t \leq \frac{1}{2}$,
\begin{equation}
\label{gche-quotient-integral-9}
\begin{aligned}
	\a \chi^{\a - 1}_{t u} \wedge \omega^{n - \a} - \b f \chi^{\b - 1}_{t u} \wedge \omega^{n - \b} > c_2 \chi^{\a - 1}_{t u} \wedge \omega^{n - \a}.
\end{aligned}
\end{equation}
Therefore, combining \eqref{gche-quotient-integral-6} - \eqref{gche-quotient-integral-9} and Lemma~\ref{gche-lemma-iteration}, choosing small $\epsilon_1$ and then sufficiently large $p$, we can obtain
\begin{equation}
\label{gche-quotient-integral-10}
\begin{aligned}
	I \geq \frac{c_1 p}{2} \int_M e^{- p u} \sqrt{- 1} \p u \wedge \bpartial u \wedge \omega^{n - 1} - \frac{C_8}{p} \int_M e^{- p u} \omega^n.
\end{aligned}
\end{equation}

\end{proof}

\bigskip

\section{Hessian equations}
\label{gche-hessian}
\setcounter{equation}{0}
\medskip

In this section, we follow the work of Hou, Ma and Wu~\cite{HouMaWu10} to obtain the second order estimate quadratically dependent on the gradient. Then the blow-up argument of Dinew and Kolodziej~\cite{DK} suffices to prove the gradient estimate. Higher order estimates follow from Evans-Krylov theory and Schauder estimate, which is done by Tosatti, Wang, Weinkove and Yang~\cite{TWWvY2014}. Therefore, we can apply the conitnuity method shown in \cite{TWv10a} and \cite{Sun2013e} to prove the existence of solution.

\begin{lemma}
\label{lemma-hessian-C2}
Let $u \in C^4(M)$ be an admissible solution to Hessian equation~\eqref{hessian-equation} on closed K\"ahler manifold $(M,\omega)$. Then there is a second derivative estimate
\begin{equation}
	\sup_M |\p\bpartial u| \leq C \Big(\sup_M |\nabla u|^2 + 1\Big),
\end{equation}
where $C$ depends on $||F^{\frac{1}{\a}}||_{C^2(M)}$, $\a$ and geometric data.
\end{lemma}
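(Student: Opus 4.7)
The plan is to apply the maximum principle to a test function
$$Q = \log \lambda_1 + \varphi(|\nabla u|^2) + \psi(u),$$
where $\lambda_1$ is the largest eigenvalue of $\chi_u$ with respect to $\omega$, $\varphi(s) = -\tfrac{1}{2}\log(2K - s)$ with $K := \sup_M|\nabla u|^2 + 1$, and $\psi(u) = -Au$ for a large constant $A > 0$. Since $\lambda_1$ is only Lipschitz, at its maximum point $x_0$ I would smooth it by working in $\omega$-unitary coordinates that diagonalize $\chi_u$ at $x_0$, together with the standard perturbation that separates coincident eigenvalues. The goal is to show $\lambda_1(x_0) \leq C K$, which then yields the full second-order bound because $u$ is $\a$-admissible.

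At $x_0$, I apply the linearized operator $L = \sum_i F^{i\bar i}\p_i\bpartial_i$ with $F = \sigma_\a^{1/\a}$ to each summand of $Q$ and use $LQ \leq 0$. This produces: a main contribution from $L(\log\lambda_1)$, which equals $L(\chi_u)_{1\bar 1}/\lambda_1$ minus a bad third-order term $\sum_i F^{i\bar i}|\p_i \lambda_1|^2/\lambda_1^2$; a good term $-\varphi'' \sum_i F^{i\bar i}|\p_i|\nabla u|^2|^2$ from $\varphi$; and a large positive term $-A\sum_i F^{i\bar i}(\lambda_i - \chi_{i\bar i})$ from $\psi$, which exploits $\sum_i F^{i\bar i} \geq c_0 > 0$ (a consequence of G\r{a}rding's inequality on the admissible cone). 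The first-order condition $\p_i Q(x_0) = 0$ and the equation $\log F = \log f/\a$ differentiated twice then furnish the identities needed to rewrite $L(\chi_u)_{1\bar 1}$ in terms of $f$, the curvature of $\omega$, and the quantities $\p_i\lambda_1$.

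The main obstacle is controlling the bad third-order term $\sum_i F^{i\bar i}|\p_i\lambda_1|^2/\lambda_1^2$ when $\lambda_1$ is very large relative to the other eigenvalues. This is handled by the Hou-Ma-Wu dichotomy tied to a small parameter $\delta > 0$: if $-\lambda_n \geq \delta\lambda_1$, then $F^{n\bar n}$ is comparable to $\sum_i F^{i\bar i}$, and the good term from $\varphi$ evaluated at index $n$ absorbs the bad term directly; if instead $-\lambda_n < \delta\lambda_1$, then $F^{1\bar 1}$ is much smaller than the other $F^{i\bar i}$, and one uses the concavity of $\sigma_\a^{1/\a}$ obtained from the twice-differentiated equation to absorb the index-$1$ portion $F^{1\bar 1}|\p_1\lambda_1|^2/\lambda_1^2$ into the terms with $i \geq 2$. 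In both regimes one concludes $\lambda_1(x_0) \leq CK$, and since $(M,\omega)$ is K\"ahler, the torsion terms that complicated Section~\ref{gche-proof} do not appear here, so the argument is a direct adaptation of \cite{HouMaWu10}. Combined with the admissibility of $u$, this yields the claimed bound with $C$ depending only on $\|F^{1/\a}\|_{C^2}$, $\a$, and the background geometry of $(M,\omega,\chi)$.
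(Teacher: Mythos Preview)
Your outline follows the Hou--Ma--Wu scheme, which is exactly what the paper does. However, your choice $\psi(u) = -Au$ is a genuine error that breaks the argument. The whole point of the barrier in \cite{HouMaWu10} (and in the paper) is to take
\[
\psi(t) = -A\log\Big(1 + \frac{t}{2L}\Big),\qquad L = \sup_M|u| + 1,
\]
so that $\psi'' \geq \frac{2\delta}{1-\delta}(\psi')^2$ holds with $\delta = (1+2A)^{-1}$. In Case~2 this inequality is indispensable: after invoking the first-order condition $\p_i Q = 0$ together with the concavity term, one is left with a residual piece $-\frac{2\delta}{1-\delta}(\psi')^2\sum_{i\in I}F^{i\bar i}|u_i|^2$, and only the matching positive term $\psi''\sum_{i\in I}F^{i\bar i}|u_i|^2$ can absorb it. With $\psi'' = 0$ this residual is of order $A^2 K\sum_i F^{i\bar i}$; attempting to kill it with the G\r{a}rding term $A\lambda\sum_i F^{i\bar i}$ forces $\delta \lesssim 1/(AK)$, and then the Case~1 conclusion degrades to $\lambda_1 \leq CK^2$ rather than $\lambda_1 \leq CK$ --- too weak for the Dinew--Ko\l odziej blow-up.

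Your description of Case~2 is also slightly inverted. The dangerous indices are not $i=1$ (where $F^{1\bar 1}$ is smallest and the bad term is already harmless) but rather those $i$ with $F^{i\bar i} > \delta^{-1}F^{1\bar 1}$. For such $i$ the paper extracts from the concavity term $\frac{1}{\lambda_1}\sum_{i\neq j}S_{\a-2;ij}|X_{i\bar j 1}|^2$ a contribution of the form $\frac{1-\delta}{1+\delta}\,\frac{F^{i\bar i}}{\lambda_1^2}\big(|\p_i\lambda_1|^2 + 2\mathfrak{Re}\{\p_i\lambda_1\,\bar b_i\}\big)$ with $b_i = \chi_{i\bar 11} - \chi_{1\bar 1i}$, and it is the combination of this, the identity $\varphi'' = 2(\varphi')^2$, the first-order condition, and the crucial $\psi''$ term that closes the estimate. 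Replace your $\psi$ by the logarithmic one and rerun Case~2 with this index set, and the argument goes through exactly as in the paper.
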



First of all, we need to clarify the derivatives. Assume that $U$ is a $C^2$ Hermitian structure. Let
\begin{equation}
	W(x) = \sup_{\xi\in T^{(1,0)}_x M, |\xi|_g = 1} U(\xi,\bar\xi) .
\end{equation}
$W$ is Lipschitz coninuous but might not be differentiable. In the calculation, we need to understand the formal differentiation of $W$, which should independent on the choice of local coordinate charts. In other words, the formal derivatives of $W$ are covariant derivatives.

Suppose that $W(x)$ is achieved at some unit $\xi\in T^{(1,0)}_x M$. For any $\zeta \in T^{(1,0)}_x M$ and $t \in \bfR$,
\begin{equation}
	\frac{U(\xi + t\zeta, \bar\xi + t\bar\zeta)}{g(\xi + t \zeta, \bar\xi + t\bar\zeta)} \leq U(\xi, \bar\xi) .
\end{equation}
As an immediate result,
\begin{equation}
	\frac{d}{d t} \left( \frac{U(\xi + t\zeta, \bar\xi + t\bar\zeta)}{g(\xi + t\zeta, \bar\xi + t\bar\zeta)}\right)\Bigg|_{t = 0} = 0 ,
\end{equation}
that is,
\begin{equation}
\label{hessian-formual-variation-1st}
	U(\xi,\bar\zeta) + U(\zeta,\bar\xi) = U(\xi,\bar\xi) [g(\xi,\bar\zeta) + g(\zeta,\bar\xi)] .
\end{equation}

Considering the eigenvalues of $U$ with respect to $g$, we can sort them in order $\lambda_1 \geq \lambda_2 \geq \cdots \geq \lambda_n$. Suppose that $W(p)$ is attained at some unit $\eta \in T^{(1,0)}_p M$. There are two cases $\lambda_1 > \lambda_2$ and $\lambda_1 = \lambda_2$.

{\bf Case 1.} If $\lambda_1 > \lambda_2$ at $p$, then the inequality holds true in a small neighborhood $B_\delta$ of $p$. Thus, we can find a smooth $(1,0)$-vector field $\eta$ in $B_\delta$ such that for any $x \in B_\delta$,
\begin{equation}
	g(\eta,\bar\eta ) = 1, \text{ and }\qquad U(\eta,\bar\eta) = \sup_{\xi \in T^{(1,0)}_x M , |\xi|_g = 1} U(\xi , \bar\xi) .
\end{equation}
For any real vector field $\gamma$ on $B_\delta$,
\begin{equation}
	U(\eta , \nabla_\gamma \bar\eta) + U(\nabla_\gamma \eta , \bar\eta)
	= U (\eta,\bar\eta) [g(\eta,\nabla_\gamma \bar\eta) + g(\nabla_\gamma \eta, \bar\eta)] 
	= 0 .
\end{equation}
Thus
\begin{equation}
\label{formal-derivative-1-1}
	\gamma U(\eta, \bar\eta) = \nabla U (\eta, \bar\eta, \gamma) + U(\nabla_\gamma \eta, \bar\eta) + U(\eta, \nabla_\gamma \bar\eta) = \nabla U (\eta, \bar\eta, \gamma),
\end{equation}
which is real valued. By decomposition,
\begin{equation}
\label{formal-derivative-1-2}
	\p_i U(\eta, \bar\eta) = \nabla U \left(\eta, \bar\eta, \frac{\p}{\p z^i}\right) .
\end{equation}

Now we calculate the second derivatives.
\begin{equation}
\label{formal-derivative-2-1}
\begin{aligned}
	\bpartial_i\p_i U(\eta,\bar\eta) =&\, \bpartial_i \left[\nabla U\left(\eta ,\bar\eta, \frac{\p}{\p z^i}\right)\right] \\
	=&\, \nabla^2 U \left(\eta, \bar\eta, \frac{\p}{\p z^i}, \frac{\p}{\p\bar z^i}\right) + \p_i \left[ U \left(\nabla_{\frac{\p}{\p\bar z^i}} \eta, \bar\eta \right) + U \left(\eta, \nabla_{\frac{\p}{\p\bar z^i}}\bar\eta \right)\right] \\
	 &\, - U \left(\nabla_{\frac{\p}{\p z^i}} \nabla_{\frac{\p}{\p\bar z^i}} \eta, \bar\eta\right) - U \left(\nabla_{\frac{\p}{\p\bar z^i}} \eta, \nabla_{\frac{\p}{\p z^i}}\bar\eta\right) \\
	 &\, - U \left(\nabla_{\frac{\p}{\p z^i}}\eta, \nabla_{\frac{\p}{\p\bar z^i}}\bar\eta \right) - U \left(\eta, \nabla_{\frac{\p}{\p z^i}}\nabla_{\frac{\p}{\p\bar z^i}} \bar\eta\right) \\
	=&\, \nabla^2 U \left(\eta, \bar\eta, \frac{\p}{\p z^i}, \frac{\p}{\p\bar z^i}\right) - U \left(\nabla_{\frac{\p}{\p z^i}}\eta, \nabla_{\frac{\p}{\p\bar z^i}}\bar\eta \right) - U \left(\nabla_{\frac{\p}{\p\bar z^i}} \eta, \nabla_{\frac{\p}{\p z^i}}\bar\eta\right)  \\
	 &\, + U (\eta , \bar\eta)  g \left( \nabla_{\frac{\p}{\p\bar z^i}} \eta, \nabla_{\frac{\p}{\p z^i}} \bar\eta\right) + U (\eta, \bar\eta)  g \left(\nabla_{\frac{\p}{\p z^i}} \eta, \nabla_{\frac{\p}{\p\bar z^i}} \bar\eta\right) + Y, 
\end{aligned}
\end{equation}
where
\begin{equation}
\begin{aligned}
	Y =&\,  - \frac{1}{2} U \left(\nabla_{\frac{\p}{\p z^i}} \nabla_{\frac{\p}{\p\bar z^i}} \eta, \bar\eta\right) + \frac{1}{2} U \left(\eta , \nabla_{\frac{\p}{\p\bar z^i}} \nabla_{\frac{\p}{\p z^i}} \bar \eta\right) \\
	 &\,  - \frac{1}{2} U \left(\eta, \nabla_{\frac{\p}{\p z^i}}\nabla_{\frac{\p}{\p\bar z^i}} \bar\eta\right) + \frac{1}{2} U \left( \nabla_{\frac{\p}{\p\bar z^i}} \nabla_{\frac{\p}{\p z^i} } \eta, \bar \eta\right) .
\end{aligned}
\end{equation}
By the definition of $\eta$, 
\begin{equation}
\label{formal-derivative-2-2}
	\bpartial_i\p_i U(\eta,\bar\eta) \geq \nabla^2 U \left(\eta, \bar\eta, \frac{\p}{\p z^i}, \frac{\p}{\p\bar z^i}\right) + Y .
\end{equation}
But by the symmetry of $U$, $\bar Y = - Y$. 
Thus we have
\begin{equation}
\label{formal-derivative-2-3}
	\bpartial_i\p_i U(\eta,\bar\eta) 
	\geq \mathfrak{Re}\left\{ \nabla^2 U \left(\eta, \bar\eta, \frac{\p}{\p z^i}, \frac{\p}{\p\bar z^i}\right) \right\} .
\end{equation}

{\bf Case 2.} If $\lambda_1 = \lambda_2$ at $p$ we can lift up $\lambda_1$ locally by perturbation method and hence transform this case into the first case. Then a standard limiting argument can be applied if necessary.

Now we begin to prove the second order estimates.
\begin{proof}[Proof of Lemma~\ref{lemma-hessian-C2}]
We can rewrite the equation \eqref{hessian-equation} as below,
\begin{equation}
\label{hessian-equation-expression}
	S_\a = F : = C^\a_n f\,.
\end{equation}
Expressing $X = \chi_u$ and differentiating \eqref{hessian-equation-expression} twice under a normal coordinate chart, we obtain
\begin{equation}
\label{hessian-C2-formula-S-1}
	\p_l F = \sum_i S_{\a - 1;i} X_{i\bar il} \,,
\end{equation}
and
\begin{equation}
\label{hessian-C2-formula-S-2}
	\bpartial_l\p_l F = - \sum_{i\neq j} S_{\a - 2;ij} X_{i\bar jl} X_{j\bar i\bar l} + \sum_{i\neq j} S_{\a - 2;ij} X_{j\bar jl} X_{i\bar i\bar l} + \sum_i S_{\a - 1;i} X_{i\bar il\bar l} \,.
\end{equation}

Define
\begin{equation}
\label{hessian-test-function}
	H (x , \xi) = \ln \Bigg(\sum_{i,j} X_{i\bar j} \xi^i \bar\xi^j \Bigg) + \varphi (|\nabla u|^2) + \psi (u)
\end{equation}
where
\begin{equation}
\begin{aligned}
	\varphi (s) &= - \frac{1}{2} \ln \left(1 - \frac{s}{2K}\right), &&\quad \text{ for } 0\leq s \leq K -1, \\
	\psi (t) &= - A \ln \left(1 + \frac{t}{2L}\right), &&\quad \text{ for } - L + 1 \leq t \leq 0,
\end{aligned}
\end{equation}
for 
\begin{equation}
	K := \sup_M |\nabla u|^2 + 1,\; L := \sup_M |u| + 1,\; A := 2L(C_0 + 1)
\end{equation}
and $C_0$ is to be determined later. Note that we have
\begin{equation}
	\frac{1}{2K} \geq \varphi' \geq \frac{1}{4K} > 0,\,\,\, \varphi'' = 2(\varphi')^2 > 0 
\end{equation}
and
\begin{equation}
	\frac{A}{L} \geq - \psi' \geq \frac{A}{2L} = C_0 + 1,\,\, \psi'' \geq \frac{2\epsilon}{1 - \epsilon} (\psi')^2,\,\,\, \mbox{for all } \epsilon \leq \frac{1}{2A + 1} .
\end{equation}
Assume that $H$ achieves a maximum at some point $p$ in the unit direction of $\eta$. Around $p$, we choose a normal coordinate chart such that $X_{1\bar 1} \geq X_{2\bar 2} \geq \cdots \geq X_{n\bar n}$.
Without loss of generality, we can also assume that $X_{1\bar 1} > N$ for some fixed constant $N >> 1$. 
Note that
\begin{equation}
	X_{i\bar i} > - (n - 2) X_{1\bar 1}
\end{equation}
for any $1 \leq i\leq n$.

By \eqref{formal-derivative-1-2}, \eqref{formal-derivative-2-2} and a possible limiting argument at $p$,
\begin{equation}
\label{hessian-derivative-1}
	H_i := \frac{X_{1\bar 1 i}}{X_{1\bar 1}} + \varphi' \p_i (|\nabla u|^2) + \psi' u_i = 0
\end{equation}
and
\begin{equation}
\label{hessian-derivative-2}
\begin{aligned}
 H_{i\bar i} :=&\, \frac{\mathfrak{Re}\{X_{1\bar 1i\bar i}\}}{X_{1\bar 1}} - \frac{|X_{1\bar 1i}|^2 }{X^2_{1\bar 1}} + \varphi'' |\p_i(|\nabla u|^2) |^2 \\
	 &\, + \varphi' \bpartial_i\p_i (|\nabla u|^2) + \psi'' |u_i|^2 + \psi' u_{i\bar i}\, \leq 0.
\end{aligned}
\end{equation}
From \eqref{hessian-derivative-1},
\begin{equation}
	\frac{X_{1\bar 1i}}{X_{1\bar 1}} = - \varphi' \p_i (|\nabla u|^2) - \psi' u_i .
\end{equation}
From \eqref{hessian-derivative-2},
\begin{equation}
\label{hessian-formula-2-1}
\begin{aligned}
	0 \geq&\, \frac{1}{X_{1\bar 1}}\mathfrak{Re}\left\{X_{i\bar i 1\bar 1} + R_{i\bar i1\bar 1} X_{1\bar 1} - R_{1\bar 1i\bar i} X_{i\bar i} - G_{1\bar 1i\bar i}\right\}  - \frac{|X_{1\bar 1i}|^2}{X^2_{1\bar 1}} \\
	 &\, + \varphi'' |\p_i(|\nabla u|^2) |^2 + \varphi' \bpartial_i\p_i (|\nabla u|^2) + \psi'' |u_i|^2 + \psi' u_{i\bar i} ,
\end{aligned}
\end{equation}
where
\begin{equation}
    	G_{i\bar ij\bar j} = \chi_{j\bar ji\bar i} - \chi_{i\bar ij\bar j} + \sum_p R_{j\bar ji\bar p}\chi_{p\bar i} -\sum_p R_{i\bar ij\bar p}\chi_{p\bar j}.
\end{equation}
Combining \eqref{hessian-formula-2-1}, \eqref{hessian-C2-formula-S-1} and \eqref{hessian-C2-formula-S-2} and the concavity of $S^{1/\a}_\a$, we have
\begin{equation}
\label{hessian-formula-2-2}
\begin{aligned}
 	0 \geq&\, \frac{1}{X_{1\bar 1}} \left( \bpartial_1\p_1 F - \frac{\a - 1}{\a} \p_1F \bpartial_1F\right) +  \frac{1}{X_{1\bar 1}} \sum_{i\neq j} S_{\a - 2;ij} X_{i\bar j1} X_{j\bar i\bar 1}  \\
	 &\,+ \frac{1}{X_{1\bar 1}}\sum_i S_{\a - 1;i}\mathfrak{Re}\left\{ R_{i\bar i1\bar 1} X_{1\bar 1} - R_{1\bar 1i\bar i} X_{i\bar i} - G_{1\bar 1i\bar i}\right\} \\
	 &\, - \frac{1}{X^2_{1\bar 1}} \sum_i S_{\a - 1;i} |X_{1\bar 1i}|^2  + \varphi''\sum_i S_{\a - 1;i}  |\p_i(|\nabla u|^2)|^2 \\
	 &\,   + \varphi' \sum_i S_{\a - 1;i}\bpartial_i\p_i (|\nabla u|^2) + \psi''\sum_i S_{\a - 1;i} |u_i|^2 + \psi' \sum_i S_{\a - 1;i} u_{i\bar i} .
\end{aligned}
\end{equation}
Direct calculation shows that
\begin{equation}
\label{hessian-formula-2-3}
	\p_i (|\nabla u|^2) = \sum_j (u_j u_{\bar ji} + u_{ji} u_{\bar j}) 
\end{equation}
and
\begin{equation}
\label{hessian-formula-2-4}
\begin{aligned}
	\bpartial_i\p_i (|\nabla u|^2) &= \sum_j (u_{j\bar i} u_{\bar ji} + u_{ji} u_{\bar j\bar i} + u_{ji\bar i} u_{\bar j} + u_j u_{\bar ji\bar i}) .
\end{aligned}
\end{equation}
Aslo note that
\begin{equation}
\label{hessian-formula-2-5}
	u_{ji\bar i} = \sum_k R_{i\bar i j\bar k} u_k + u_{i\bar i j},
\end{equation}
and
\begin{equation}
\label{hessian-formula-2-6}
	u_{\bar j i\bar i} = u_{i\bar i\bar j}.
\end{equation}
Combining \eqref{hessian-formula-2-2}, \eqref{hessian-formula-2-4}, \eqref{hessian-formula-2-5} and \eqref{hessian-formula-2-6}, and throwing away some nonnegative terms,
\begin{equation}
\label{hessian-formula-2-7}
\begin{aligned}
  	0 
	 \geq&\, \frac{1}{X_{1\bar 1}} \left( \bpartial_1\p_1 F - \frac{\a - 1}{\a} \p_1F \bpartial_1 F\right) +  \frac{1}{X_{1\bar 1}} \sum_{i\neq j} S_{\a - 2;ij} X_{i\bar j1} X_{j\bar i\bar 1}  \\
	 &\,+ \frac{1}{X_{1\bar 1}}\sum_i S_{\a - 1;i}R_{i\bar i1\bar 1}\left( X_{1\bar 1} - X_{i\bar i}\right) - \frac{1}{X_{1\bar 1}}\sum_i S_{\a - 1;i}\mathfrak{Re}\left\{ G_{1\bar 1i\bar i}\right\} \\
	 &\, - \frac{1}{X^2_{1\bar 1}} \sum_i S_{\a - 1;i} |X_{1\bar 1i}|^2  + \varphi''\sum_i S_{\a - 1;i}  |\p_i(|\nabla u|^2)|^2  + \varphi' \sum_i S_{\a - 1;i} X^2_{i\bar i} \\
	 &\,   - 2  \varphi' \sum_i S_{\a - 1;i}  X_{i\bar i} \chi_{i\bar i} + \varphi' \sum_{i,j,k} S_{\a - 1;i} R_{i\bar i j\bar k} u_k  u_{\bar j} + 2 \varphi' \sum_{j} \mathfrak{Re}\{\p_j F u_{\bar j}\} \\
	 &\, - 2\varphi' \sum_{i,j} S_{\a - 1;i}  \mathfrak{Re}\{\chi_{i\bar i j} u_{\bar j}\}  + \psi''\sum_i S_{\a - 1;i} |u_i|^2 + \psi' \sum_i S_{\a - 1;i} u_{i\bar i}  .
\end{aligned}
\end{equation}
We have
\begin{equation}
\label{hessian-formula-2-8}
\begin{aligned}
	 &\, \frac{1}{X_{1\bar 1}}\sum_i S_{\a - 1;i}R_{i\bar i1\bar 1}\left( X_{1\bar 1} - X_{i\bar i}\right) - \frac{1}{X_{1\bar 1}}\sum_i S_{\a - 1;i}\mathfrak{Re}\left\{ G_{1\bar 1i\bar i}\right\} \\
	 \geq&\, \inf_k (R_{1\bar 1k\bar k}) \frac{1}{X_{1\bar 1}} \sum_i S_{\a - 1;i} (X_{1\bar 1} - X_{i\bar i}) - \sup_k (\mathfrak{Re} \{G_{1\bar 1k\bar k}\})\frac{1}{X_{1\bar 1}} \sum_i S_{\a - 1;i} \\
	 \geq&\, \inf_k (R_{1\bar 1k\bar k}) \sum_i S_{\a - 1;i} - \inf_k (R_{1\bar 1k\bar k}) \a F - \sup_k (\mathfrak{Re} \{G_{1\bar 1k\bar k}\})\frac{1}{X_{1\bar 1}} \sum_i S_{\a - 1;i} ,
\end{aligned}
\end{equation}
and
\begin{equation}
\label{hessian-formula-2-9}
\begin{aligned}
	- 2  \varphi' \sum_i S_{\a - 1;i}  X_{i\bar i} \chi_{i\bar i} 
	\geq&\, - \frac{1}{2} \varphi' \sum_i S_{\a - 1;i} X^2_{i\bar i} - \frac{\sup_k \chi^2_{k\bar k}}{K} \sum_i S_{\a - 1;i} .
\end{aligned}
\end{equation}
Also,
\begin{equation}
\label{hessian-formula-2-10}
\begin{aligned}
	&\, \varphi' \sum_{i,j,k} S_{\a - 1;i} R_{i\bar i j\bar k} u_k  u_{\bar j} +  2\varphi' \sum_{j} \mathfrak{Re} \{\p_j F u_{\bar j} \} - 2 \varphi' \sum_{i,j} S_{\a - 1;i}  \mathfrak{Re}\{\chi_{i\bar i j} u_{\bar j} \} \\
	\geq&\, - \frac{1}{2} \sup_{j,k,l} | R_{l\bar lj\bar k}| \sum_i S_{\a - 1;i} - \frac{|\nabla F||\nabla u|}{K} - \frac{\sup_{k}(\sum_j|\chi_{k\bar kj}|)|\nabla u|}{K} \sum_i S_{\a - 1;i} .
\end{aligned}
\end{equation}
There are constants $\Lambda \geq \lambda > 0$ such that $\chi - \lambda \omega, \Lambda \omega - \chi \in \Gamma^\a_\omega$, so by G\r{a}rding's inequality,
\begin{equation}
\label{hessian-formula-2-garding}
\begin{aligned}
	\psi' \sum_i S_{\a - 1;i} u_{i\bar i} &= \psi' \left[\sum_i S_{\a - 1;i} X_{i\bar i} - \sum_i S_{\a - 1;i} (\chi_{i\bar i} - \lambda) - \lambda \sum_i S_{\a - 1;i}\right] \\
	&\geq - 2(C_0 + 1) \a F + (C_0 + 1) \lambda \sum_i S_{\a - 1;i} .
\end{aligned}
\end{equation}
Substituting \eqref{hessian-formula-2-8}, \eqref{hessian-formula-2-9}, \eqref{hessian-formula-2-10} and \eqref{hessian-formula-2-garding} into \eqref{hessian-formula-2-7},
\begin{equation}
\label{hessian-formula-2-11}
\begin{aligned}
	 0 
	 \geq&\, - C_1 - C_2 \sum_i S_{\a - 1;i} - 2(C_0 + 1) \a F - \frac{1}{X^2_{1\bar 1}} \sum_i S_{\a - 1;i} |X_{1\bar 1i}|^2  \\
	 &\,  +  \frac{1}{X_{1\bar 1}} \sum_{i\neq j} S_{\a - 2;ij} X_{i\bar j1} X_{j\bar i\bar 1} + \varphi''\sum_i S_{\a - 1;i}  |\p_i(|\nabla u|^2)|^2 \\
	 &\,   + \frac{\varphi'}{2} \sum_i S_{\a - 1;i} X^2_{i\bar i} + \psi''\sum_i S_{\a - 1;i} |u_i|^2 + (C_0 + 1) \lambda \sum_i S_{\a - 1;i} .
\end{aligned}
\end{equation}
Choosing
\begin{equation}
C_0 = \frac{1}{\lambda} \left(\frac{C_1}{\a C^\a_n \min_M f^{\frac{\a - 1}{\a}}}  + C_2\right) ,
\end{equation}
we obtain
\begin{equation}
\label{hessian-formula-2-12}
\begin{aligned}
	 0 \geq&\, - 2(C_0 + 1) \a F - \frac{1}{X^2_{1\bar 1}} \sum_i S_{\a - 1;i} |X_{1\bar 1i}|^2  \\
	 &\,  +  \frac{1}{X_{1\bar 1}} \sum_{i\neq j} S_{\a - 2;ij} X_{i\bar j1} X_{j\bar i\bar 1} + \varphi''\sum_i S_{\a - 1;i}  |\p_i(|\nabla u|^2)|^2 \\
	 &\,   + \varphi' \sum_i S_{\a - 1;i} X^2_{i\bar i} + \psi''\sum_i S_{\a - 1;i} |u_i|^2 + \lambda \sum_i S_{\a - 1;i} .
\end{aligned}
\end{equation}

Define
\begin{equation}
\label{hessian-defintion-delta} 
	\delta = \frac{1}{1 + 2A} = \frac{1}{1 + 4L(C_0 + 1)},
\end{equation}
and then we have two cases in consideration.

{\bf Case 1.} $\lambda_n < - \delta \lambda_1$

The first derivative \eqref{hessian-derivative-1} tells us that
\begin{equation}
\label{hessian-formula-case-1-1}
\begin{aligned}
	 0 \geq&\, - 2(C_0 + 1) \a F -  \sum_i S_{\a - 1;i} |\varphi'\p_i (|\nabla u|^2)  + \psi' u_i|^2  \\
	 &\,  +  \frac{1}{X_{1\bar 1}} \sum_{i\neq j} S_{\a - 2;ij} X_{i\bar j1} X_{j\bar i\bar 1}  + \varphi''\sum_i S_{\a - 1;i}  |\p_i(|\nabla u|^2)|^2 \\
	 &\,   + \frac{\varphi'}{2} \sum_i S_{\a - 1;i} X^2_{i\bar i} + \psi''\sum_i S_{\a - 1;i} |u_i|^2 + \lambda \sum_i S_{\a - 1;i} \\
	 \geq&\,  - 2(C_0 + 1) \a F - 8 (C_0 + 1)^2 K \sum_i S_{\a - 1;i}  + \frac{1}{8K} \sum_i S_{\a - 1;i} X^2_{i\bar i} .
\end{aligned}
\end{equation}
Therefore,
\begin{equation}
\label{hessian-formula-case-1-2}
\begin{aligned}
	 0 \geq&\,  - 2(C_0 + 1) \a F - 8 (C_0 + 1)^2 K \sum_i S_{\a - 1;i}  + \frac{1}{8K} S_{\a - 1;n} X^2_{n\bar n} \\
	 \geq&\,  - 2(C_0 + 1) \a F - 8 (C_0 + 1)^2 K \sum_i S_{\a - 1;i}  + \frac{1}{8nK} X^2_{n\bar n} \sum_i S_{\a - 1;i} .
\end{aligned}
\end{equation}
It follows that
\begin{equation}
\label{hessian-formula-case-1-3}
	X^2_{1\bar 1} \leq \frac{8nK}{\delta^2} \left[ \frac{2 (C_0 + 1) \a \sup_M F}{(n - \a + 1) C^{\a - 1}_n \inf_M f^{\frac{\a - 1}{\a}}} + 8(C_0 + 1)^2 K\right]
\end{equation}
and thus
\begin{equation}
	X_{1\bar 1} \leq \frac{4K}{\delta} \sqrt{\frac{ n (C_0 + 1) \sup_M f}{ \inf_M f^{\frac{\a - 1}{\a}}} + 4 n (C_0 + 1)^2 } .
\end{equation}

{\bf Case 2.} $\lambda_n \geq - \delta \lambda_1$.

Let
\begin{equation}
\label{hessian-definition-I}
	I = \{i\in\{1,\cdots,n\}\,|\, S_{\a - 1;i} > \delta^{-1} S_{\a - 1;1}\}.
\end{equation}
Obviously, $1\not\in I$.

From \eqref{hessian-formula-2-12}, 
\begin{equation}
\label{hessian-formula-case-2-1}
\begin{aligned}
	0 
	 \geq&\, - 2(C_0 + 1) \a F - \frac{1}{X^2_{1\bar 1}} \sum_i S_{\a - 1;i} |X_{1\bar 1i}|^2 + \varphi''\sum_i S_{\a - 1;i}  |\p_i(|\nabla u|^2)|^2  \\
	 &\, + \frac{1 - \delta}{1 + \delta} \frac{1}{X^2_{1\bar 1}} \sum_{i\in I} S_{\a - 1;i} \left(|X_{1\bar 1i} |^2 + 2\mathfrak{Re}\{X_{1\bar 1i}\bar b_i\} \right) + \frac{\varphi'}{2} \sum_i S_{\a - 1;i} X^2_{i\bar i} \\
	 &\, + \psi''\sum_i S_{\a - 1;i} |u_i|^2 + \lambda \sum_i S_{\a - 1;i}
\end{aligned}
\end{equation}
where $b_i =\chi_{i\bar 1 1} - \chi_{1\bar 1i}$.

Let us treat the indeces which are in $I$. The first derivative \eqref{hessian-derivative-1} tells us that
\begin{equation}
\label{hessian-formula-case-2-2}
\begin{aligned}
	&\, - \frac{1}{X^2_{1\bar 1} }\sum_{i\in I} S_{\a - 1;i} |X_{1\bar 1i}|^2 + \varphi'' \sum_{i\in I} S_{\a - 1;i} |\p_i(|\nabla u|^2)|^2\\
	&\, + \frac{1 - \delta}{1 + \delta} \frac{1}{X^2_{1\bar 1}} \sum_{i\in I} S_{\a - 1;i} \left(|X_{1\bar 1i} |^2 + 2\mathfrak{Re}\{X_{1\bar 1i}\bar b_i\} \right) + \psi'' \sum_{i\in I} S_{\a - 1;i} |u_i|^2 \\
	\geq&\,  - \frac{1}{X^2_{1\bar 1} }\sum_{i\in I} S_{\a - 1;i} |X_{1\bar 1i}|^2 + 2 \sum_{i\in I} S_{\a - 1;i} \left(\delta\left|\frac{X_{1\bar 1 i}}{X_{1\bar 1}}\right|^2 - \frac{\delta}{1 - \delta} |\psi' u_i|^2\right)\\
	&\, + \frac{1 - \delta}{1 + \delta} \frac{1}{X^2_{1\bar 1}} \sum_{i\in I} S_{\a - 1;i} \left(|X_{1\bar 1i} |^2 + 2\mathfrak{Re}\{X_{1\bar 1i}\bar b_i\} \right) + \psi'' \sum_{i\in I} S_{\a - 1;i} |u_i|^2 \\
	\geq	&\, \frac{\delta^2}{X^2_{1\bar 1}} \sum_{i \in I} S_{\a - 1;i} |X_{1\bar 1i}^2| - \frac{1 - \delta}{(1 + \delta) \delta^2} \frac{1}{X^2_{1\bar 1}} \sum_{i \in I} S_{\a - 1;i} |b_i|^2 .
\end{aligned}
\end{equation}
Now let us treat the indeces which are not in I. Following \eqref{hessian-formula-case-1-1}
\begin{equation}
\label{hessian-formula-case-2-3}
\begin{aligned}
	 &\, - \frac{1}{X^2_{1\bar 1}} \sum_{i\not\in I} S_{\a - 1;i} |X_{1\bar 1i}|^2 + \varphi''\sum_{i\not\in I} S_{\a - 1;i}  |\p_i(|\nabla u|^2)|^2  \\	 
	 &\qquad\geq - 8 (C_0 + 1)^2 K \max_{i\not\in I} S_{\a - 1;i}  \geq -\frac{8 (C_0 + 1)^2 K}{\delta} S_{\a - 1;1} .
\end{aligned}
\end{equation}
Substituting the inequalities \eqref{hessian-formula-case-2-2} and \eqref{hessian-formula-case-2-3} into \eqref{hessian-formula-case-2-1},
\begin{equation}
\label{hessian-formula-case-2-4}
\begin{aligned}
	 0 \geq&\, - 2(C_0 + 1) \a F + \frac{\delta^2}{X^2_{1\bar 1}} \sum_{i \in I} S_{\a - 1;i} |X_{1\bar 1i}^2| - \frac{1 - \delta}{(1 + \delta) \delta^2 X^2_{1\bar 1}} \sum_{i \in I} S_{\a - 1;i} |b_i|^2 \\
	 &\,-\frac{8 (C_0 + 1)^2 K}{\delta} S_{\a - 1;1}  + \frac{1}{8K} \sum_i S_{\a - 1;i} X^2_{i\bar i} + \lambda \sum_i S_{\a - 1;i} \\
	 \geq&\,  - 2(C_0 + 1) \a F - \frac{1 - \delta}{(1 + \delta) \delta^2 X^2_{1\bar 1}} \sum_k |b_k|^2 \sum_{i \in I} S_{\a - 1;i}  -\frac{8 (C_0 + 1)^2 K}{\delta} S_{\a - 1;1} \\
	 &\,  + \frac{1}{8K} \sum_i S_{\a - 1;i} X^2_{i\bar i} + \lambda \sum_i S_{\a - 1;i} .
\end{aligned}
\end{equation}
Without loss of generality, we can assume that
\begin{equation}
\label{hessian-formula-case-2-5}
	X^2_{1\bar 1} \geq \max\left\{\frac{128 (C_0 + 1)^2 K^2}{\delta} , \frac{2(1 - \delta)}{\lambda (1 + \delta) \delta^2 } \sum_k |b_k|^2\right\},
\end{equation}
and hence
\begin{equation}
\label{hessian-formula-case-2-6}
	2 (C_0 + 1) \a F \geq \frac{1}{16K} \sum_i S_{\a - 1;i} X^2_{i\bar i} + \frac{\lambda}{2} \sum_i S_{\a - 1;i} .
\end{equation}
Applying Lemma 2.2 in \cite{HouMaWu10}, we obtain a lower bound for $S_{\a - 1;1}$ and then the desired bound for $X_{1\bar 1}$.

\end{proof}

\bigskip

\end{document}